\title{Lipschitz maps with prescribed local Lipschitz constants}
\author{Aidan Backus}
\address{Brown University, Providence, RI}
\email{aidan\_backus@brown.edu}
\author{Ng Ze-An}
\address{University of Malaya, Kuala Lumpur}
\email{ngzean2@hotmail.com}
\date{\today}
\keywords{Kirszbraun--Valentine theorem, Lipschitz extension, local Lipschitz constant}
\subjclass[2020]{primary: 54C20, 26A16}
\newcommand{\NN}{\mathbf{N}}
\newcommand{\RR}{\mathbf{R}}
\newcommand*\dif{\mathop{}\!\mathrm{d}}
\DeclareMathOperator{\card}{card}
\DeclareMathOperator{\dist}{dist}
\DeclareMathOperator{\Hom}{Hom}
\DeclareMathOperator{\supp}{supp}
\newcommand{\vol}{\mathrm{vol}}
\newcommand{\Lip}{\mathrm{Lip}}
\newcommand{\Riem}{\mathrm{Riem}}
\DeclareMathOperator{\Fin}{Fin}
\newcommand{\dfn}[1]{\emph{#1}\index{#1}}
\newtheorem{theorem}{Theorem}[section]
\newtheorem{lemma}[theorem]{Lemma}
\newtheorem{proposition}[theorem]{Proposition}
\newtheorem{corollary}[theorem]{Corollary}
\newtheorem{assumption}[theorem]{Assumption}
\newtheorem*{claim}{Claim}
\theoremstyle{definition}
\newtheorem{definition}[theorem]{Definition}
\newtheorem{problem}[theorem]{Problem}
\numberwithin{equation}{section}
\def\Xint#1{\mathchoice
{\XXint\displaystyle\textstyle{#1}}%
{\XXint\textstyle\scriptstyle{#1}}%
{\XXint\scriptstyle\scriptscriptstyle{#1}}%
{\XXint\scriptscriptstyle\scriptscriptstyle{#1}}%
\!\int}
\def\XXint#1#2#3{{\setbox0=\hbox{$#1{#2#3}{\int}$ }
\vcenter{\hbox{$#2#3$ }}\kern-.6\wd0}}
\def\dashint{\Xint-}
\begin{document}
\begin{abstract}
Let $\Gamma$ be a closed subset of a complete Riemannian manifold $M$ of dimension $\geq 2$, let $f: M \to N$ be a Lipschitz map to a complete Riemannian manifold $N$, and let $\psi$ be a continuous function which dominates the local Lipschitz constant of $f$.
We construct a Lipschitz map which agress with $f$ on $\Gamma$ and whose local Lipschitz constant is $\psi$.
\end{abstract}

\maketitle

\section{Introduction}
A basic problem in metric geometry concerns the extension of Lipschitz maps: given metric spaces $X, Y$, a closed set $\Gamma \subset X$ and a Lipschitz map $f: \Gamma \to Y$, can we extend $f$ to a Lipschitz map $u: X \to Y$ so that ``$u$ does not stretch $X$ more than $f$ stretches $\Gamma$"?
This problem is partially resolved by the \dfn{Kirszbraun--Valentine theorem} \cite{Lang1997,Kassel17}, which, under a suitable hypothesis on the Alexandrov curvatures of $X, Y$, furnishes an extension $u$ whose global Lipschitz constant
$$\Lip(u, X) := \sup_{\substack{x, y \in X \\ x \neq y}} \frac{\dist_Y(u(x), u(y))}{\dist_X(x, y)}$$
is as small as possible.
For example, if $\Gamma$ is a closed subset of $\RR^d$ and $f: \Gamma \to \RR^D$ is a Lipschitz map, then $f$ admits an extension $u$ to all of $\RR^d$ such that 
$$\Lip(u, \RR^d) = \Lip(f, \Gamma).$$
We refer the reader to \cite[\S3]{Kassel17} for an elegant proof of the Kirszbraun-Valentine theorem in the setting of hyperbolic geometry; the same argument also applies in the euclidean setting.

Because the Kirszbraun--Valentine extension $u$ of $f$ can be nonunique, it is natural to impose further conditions on $u$ by constraining the \dfn{local Lipschitz constants}
$$Lu(x) := \lim_{r \to 0} \Lip(u, B(x, r)).$$
Previous work has considered maps which minimize $Lu$ in various senses; such maps are called \dfn{tight} or \dfn{$\infty$-harmonic} \cite{Crandall2008,Sheffield12,Naor2012}.
The $\infty$-harmonic scalar fields are exactly the viscosity solutions of the \dfn{$\infty$-Laplacian}
$$\langle \nabla^2 u, \nabla u \otimes \nabla u\rangle = 0.$$
The existence and uniqueness of tight maps is a fascinating and challenging open question \cite[\S5]{Sheffield12}.

In this paper, we complement the study of tight maps by exhibiting a sort of extreme nonuniqueness for the Kirszbraun--Valentine theorem.
Given any continuous function $\psi$ which is greater than the local Lipschitz constant of a Lipschitz extension of $f$, we construct a Lipschitz extension of $f$ with local Lipschitz constant $\psi$.
More precisely, we show:

\begin{theorem}\label{eikonal extension}
Let $M$ be a complete Riemannian manifold such that $\dim M \geq 2$, and let $N$ be a complete Riemannian manifold.
Let $\Gamma \subseteq M$ be a closed set and $\psi: M \setminus \Gamma \to \RR_+$ be a continuous function.
Then for every Lipschitz map $f: M \to N$ such that $Lf \leq \psi$ on $M \setminus \Gamma$, there exists a Lipschitz map $u: M \to N$, homotopic to $f$ relative to $\Gamma$, such that
\begin{equation}\label{eikonal equation}\
Lu = \psi \text{ on } M \setminus \Gamma.
\end{equation}
\end{theorem}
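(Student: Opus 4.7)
The plan is to construct $u$ as a uniform limit of Lipschitz maps $u_0 = f, u_1, u_2, \ldots$, where each $u_{k+1}$ is obtained from $u_k$ by adding a small, high-frequency oscillatory perturbation supported in a ball inside $\Omega := M \setminus \Gamma$. These balls will be drawn from a nested cover of $\Omega$ whose mesh tends to zero and which stays uniformly away from $\Gamma$, so that every point of $\Omega$ is touched by perturbations at arbitrarily small scales while $\Gamma$ is never touched.

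The core local move is as follows. Given Lipschitz $g: M \to N$ with $Lg \leq \psi$, a small ball $B = B(x_0, r) \Subset \Omega$ on which $\psi$ is nearly the constant $\psi_0$, and a tolerance $\varepsilon > 0$, I would produce $g'$ equal to $g$ outside $B$ with $Lg' \leq \psi$ everywhere, $Lg'(y) \geq \psi_0 - \varepsilon$ on a dense subset of $B(x_0, r/2)$, and $\|g' - g\|_\infty \leq \varepsilon$. In normal coordinates on $B$, pick a unit vector $e \in T_{x_0} M$ where $|dg(x_0) \cdot e|$ is minimal (this is where $\dim M \geq 2$ enters, giving freedom to choose $e$ transverse to the direction of maximal stretch), together with a unit vector field $w$ along $g|_B$ in $T_{g(x)} N$ that is approximately orthogonal to the image of $dg$. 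Then set
$$g'(x) := \exp_{g(x)}\!\bigl(\chi(x)\, \eta\, \phi(\omega\langle x - x_0, e\rangle)\, w(x)\bigr),$$
where $\phi$ is a piecewise-linear sawtooth with $|\phi'| = 1$, $\chi$ is a cutoff supported in $B$ and equal to $1$ on $B(x_0, r/2)$, and $\eta\omega$ is tuned so that the Pythagorean estimate $|dg' \cdot u|^2 \approx |dg \cdot u|^2 + (\eta\omega)^2 \langle u, e\rangle^2$ has operator norm exactly $\psi_0$ on the linear portions of the sawtooth. The straight-line path $t \mapsto \exp_{g(x)}(t\chi\eta\phi w)$ gives a homotopy from $g$ to $g'$ rel $M \setminus B$.

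For the global assembly, I would exhaust $\Omega$ by compact sets $K_1 \Subset K_2 \Subset \cdots$ with $\bigcup_n K_n = \Omega$ and, for each $n$, cover $K_n \setminus K_{n-1}$ by finitely many balls of radius $r_n$ with $r_n \leq \tfrac12 \dist(K_n, \Gamma)$ and $r_n \to 0$. Enumerating these balls and applying the local move in turn, with frequencies $\omega_k \to \infty$ growing fast enough that the $k$-th sawtooth is much finer than every earlier perturbation overlapping its support, and amplitudes $\eta_k \lesssim \omega_k^{-1}$ with $\sum_k \eta_k < \infty$, produces a uniformly Cauchy sequence $u_k \to u$ with $u = f$ on $\Gamma$. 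The upper bound $Lu \leq \psi$ is preserved at each stage (this is the whole point of the Pythagorean tuning) and passes to the limit, while $Lu(x) \geq \psi(x)$ holds on $\Omega$ because arbitrarily small neighborhoods of any $x$ contain fresh sawtooth oscillations realizing local Lipschitz constant $\psi(x) + o(1)$. Concatenating the local homotopies on a geometric schedule of time intervals gives $u \simeq f$ rel $\Gamma$.

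I expect the main obstacle to lie in the local move: making the Pythagorean identity sharp enough that $Lg' \leq \psi$ is truly preserved at every scale, including scales comparable to the sawtooth wavelength $\omega^{-1}$. The identity is exact only when $w \perp dg$, $e$ is a genuine right-singular direction of $dg$, and $\psi$ is strictly constant on $B$; in general there are correction terms of order $r\|\nabla\psi\| + \eta\|dw\|_\infty$ together with curvature contributions from $\exp$, all of which must be absorbed into the $\varepsilon$-budget by shrinking $r$ and $\omega^{-1}$. A subtler issue arises where $\psi(x) = |dg(x)|$ in every direction, since then there is no slack in which to perturb; this must be handled by sufficiently aggressive subdivision so that on each ball the auxiliary direction $e$ can be chosen with $|dg \cdot e|$ strictly less than $\psi_0$. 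Finally, one must check that the cumulative effect of many perturbations does not push $Lu$ above $\psi$, which is what forces $\omega_k$ to grow fast relative to the cover's mesh and $\eta_k$ to decay even faster.
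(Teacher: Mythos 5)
The central gap is in the passage to the limit for the lower bound $Lu \geq \psi$. The local Lipschitz constant is only \emph{lower} semicontinuous under convergence (i.e.\ $Lu \leq \liminf_k Lu_k$ a.e., which is the direction that helps the upper bound); it can collapse in the limit. For example $u_k(x) = k^{-1}\sin(kx) \to 0$ uniformly with $Lu_k(x) = |\cos(kx)|$ equal to $1$ on a large set, yet $L0 = 0$. Your argument that ``arbitrarily small neighborhoods of $x$ contain fresh sawtooth oscillations realizing local Lipschitz constant $\psi(x)+o(1)$'' only shows $Lu_k(y_k) \geq \psi(y_k) - o(1)$ for some stage $k$ and some $y_k \to x$; it says nothing about $Lu$ unless the oscillation achieving this is never disturbed by the infinitely many later perturbations overlapping its support. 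In your scheme later perturbations \emph{do} overlap earlier ones, and each must carry derivative of size $\eta_j\omega_j \approx (\psi_0^2 - |dg\cdot e|^2)^{1/2} = O(1)$ in order to raise the Lipschitz constant at all, so their gradients neither decay nor have disjoint supports: the condition $\sum_k \eta_k < \infty$ controls $\|u_k - u\|_\infty$ but not $Lu$, and destructive interference is precisely the failure mode the construction must be organized around. The paper's mechanism is to concentrate each improvement on a short closed geodesic segment $\ell$ of positive codimension (this, not the choice of a transverse direction $e$, is the real role of the hypothesis $\dim M \geq 2$), and to choose every subsequent packing inside the complement of the segments already produced, with a quantitative separation estimate between segments across all scales; then $u|_\ell = u_j|_\ell$ holds \emph{exactly} in the limit, so the near-maximal Lipschitz constant on $\ell$ survives. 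Your proposal needs some analogous ``freezing'' device, and without one the key claim $Lu \geq \psi$ is unproved.

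A secondary gap concerns the upper bound $Lg' \leq \psi$ for a single local move. Your Pythagorean tuning presupposes a well-defined $dg(x)$, a singular direction $e$, and a unit normal field $w$ along $g$, but $g$ is merely Lipschitz: these objects exist only almost everywhere and vary measurably, and an almost-everywhere bound $|dg'|_\infty \leq \psi$ does not by itself yield the pointwise bound $Lg' \leq \psi$ (one needs an averaging lemma such as the paper's ``lower semicontinuity on average,'' in addition to absorbing the curvature and $\nabla\psi$ corrections you list). The paper avoids the computation entirely with a soft argument: it scales the oscillation amplitude $t$ continuously, proves that $h(t) = \max_{z}(Lu_t(z) - \psi(z))$ is a continuous (indeed Lipschitz) function of $t$ via a net-over-finite-subsets argument, and stops at the first $T$ with $h(T) = 0$. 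This simultaneously guarantees $Lu_T \leq \psi$ everywhere and saturation at some point, with no differentiation of $g$ and no smallness assumptions on $\psi$'s oscillation over the ball. I would recommend adopting both devices --- the intermediate-value stopping rule for the upper bound and the separated-segments bookkeeping for the lower bound --- as neither appears to be replaceable by the quantitative tuning of $(\eta_k, \omega_k)$ alone.
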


By taking $\Gamma = \emptyset$, $N = \RR$, and $f = 0$ in Theorem \ref{eikonal extension}, we conclude:

\begin{corollary}\label{continuous functions are local Lips}
For every continuous function $\psi: M \to \RR_+$, there exists a Lipschitz function $u: M \to \RR$ with $Lu = \psi$.
\end{corollary}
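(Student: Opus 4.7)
The plan is to apply Theorem \ref{eikonal extension} directly with the specialized data indicated in the preceding sentence, namely $\Gamma = \emptyset$, $N = \RR$ (viewed as a complete Riemannian manifold under its standard metric), and $f \equiv 0: M \to \RR$. The bulk of the work is simply to check that all hypotheses of the theorem are satisfied in this setting.

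First I would verify the ambient hypotheses: $M$ is the complete Riemannian manifold of dimension $\geq 2$ carried over from the statement of the theorem, and $\RR$ is complete as a Riemannian manifold. The set $\Gamma = \emptyset$ is trivially closed, so the continuity requirement on $\psi$ translates into continuity of $\psi: M \to \RR_+$, which is exactly what the corollary assumes. The zero map $f \equiv 0$ is Lipschitz with $L f \equiv 0$, and since $\psi$ takes values in $\RR_+$ we have $Lf = 0 \leq \psi$ on $M \setminus \Gamma = M$.

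Once the hypotheses are in place, Theorem \ref{eikonal extension} produces a Lipschitz map $u: M \to \RR$, homotopic to $f$ relative to $\Gamma = \emptyset$, such that $Lu = \psi$ on $M \setminus \Gamma = M$. The homotopy conclusion is vacuous here (any two continuous maps into $\RR$ are homotopic, and there is no constraint on $\Gamma = \emptyset$), and what remains is precisely the statement of the corollary.

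There is no serious obstacle to overcome at this stage: the analytical content is entirely absorbed into Theorem \ref{eikonal extension}, and the corollary is merely the observation that the theorem continues to say something nontrivial even when $f$ is the trivial map.
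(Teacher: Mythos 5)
Your proposal is correct and is exactly the paper's argument: the corollary is stated as an immediate specialization of Theorem \ref{eikonal extension} with $\Gamma = \emptyset$, $N = \RR$, and $f = 0$, and your verification of the hypotheses (in particular $Lf = 0 \leq \psi$) is all that is needed.
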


Corollary \ref{continuous functions are local Lips} does not characterize those functions which can appear as local Lipschitz constants.
Indeed, $u(x, y) := 1_{x > 0} x$
has $Lu(x, y) = 1_{x \geq 0}$.
If $\psi$ appears as a local Lipschitz constant, then $\psi$ is necessarily upper-semicontinuous and ``lower-semicontinuous on average" (see Lemma \ref{lower semicontinuity on average}).
However, we do not know if this condition is sufficient; our method of proof does not seem to be able to show that such a function appears as a local Lipschitz constant.
We therefore pose:

\begin{problem}
Let $\psi: M \to \RR_+$ be an upper-semicontinuous function.
What conditions on $\psi$ are necessary and sufficient for there to exist a Lipschitz function $u: M \to \RR$ such that $Lu = \psi$?
\end{problem}

Let $u: M \to N$ be a map between closed manifolds which minimizes its Lipschitz constant in its homotopy class.
In applications to Teichm\"uller theory and its generalizations by the Thurston school, the \dfn{stretch set} $\{Lu = \Lip(u, M)\}$ of the map $u$ plays a crucial role \cite{Kassel17, daskalopoulos2022transverse,thurston1998minimal}.
Theorem \ref{eikonal extension} shows that any closed set which contains the so-called ``maximally-stretched geodesic lamination" (see \cite[Theorem 5.1]{Kassel17}) can appear as a stretch set.

\subsection{Idea of the proof}
It is tempting to look for solutions of the eikonal equation (\ref{eikonal equation}) using Perron's method.
Of course, if $\dim N \geq 2$, then (\ref{eikonal equation}) is a strongly coupled system, so the usual formulation of Perron's method \cite{Ishii92} fails.
However, this scheme would fail even when $\dim N = 1$, because Perron's method as usually formulated gives a viscosity solution, but viscosity solutions of (\ref{eikonal equation}) do not have to satisfy $u|_\Gamma = f$ in the classical sense \cite[\S7.C]{Crandall92}.

So we need a strategy for constructing \emph{nonviscosity} solutions of (\ref{eikonal equation}).
If we take $\tilde u := f$, then $\tilde u$ is a subsolution of (\ref{eikonal equation}) in the sense that (classically)
$$
\begin{cases}
    L\tilde u \leq \psi &\text{ on } M \setminus \Gamma,\\
    \tilde u = f &\text{ on } \Gamma.
\end{cases}
$$
The idea, as in Perron's method, is to iteratively modify $\tilde u$ in order to increase $L \tilde u$, without ever violating that $\tilde u$ is a subsolution.
We have to carry out this process carefully, however, because our modifications are artificial oscillations, which could experience destructive interference in the limit.

The trick is to observe that, since $Lu(x)$ (as opposed to, say, the Frobenius norm $|\dif u(x)|_2$) is determined by the behavior of $u$ along small line segments near $x$, and a line segment has positive codimension, we can choose line segments so that every point of $U$ is suitably close to a line segment, while ensuring that all line segments do not interact with each other.
Then our modifications of $\tilde u$ will only be along the line segments.

\subsection{Acknowledgements}
The authors would like to thank Brian Freidin for helpful discussions.

AB was supported by the National Science Foundation's Graduate Research Fellowship Program under Grant No. DGE-2040433.

\section{Preliminaries}
\subsection{Metric geometry}
Throughout, we fix complete Riemannian manifolds $M, N$, and assume that $\dim M \geq 2$.
We denote by $B(x, r)$ the ball of radius $r$ centered on $x \in M$.
If we want to consider balls in another metric space $X$, we write $B_X(x, r)$.
If $B := B(x, r)$, then $\Phi(B) := B(x, r^2)$.

If $X$ is a metric space, $x \in X$, and $Y \subseteq X$ is a set, then 
$$\dist(x, Y) := \inf_{y \in Y} \dist(x, y).$$
If $Z \subseteq X$, then we put 
$$\dist(Y, Z) := \inf_{y \in Y} \dist(y, Z),$$
which we carefully note is not the Hausdorff distance.

\begin{definition}
Let $W \subseteq M$ be an open set and $r > 0$.
An \dfn{$r$-packing} of $W$ is a set $\mathscr P$ of disjoint balls $B(x, r) \Subset W$, such that if $B(x, r) \in \mathscr P$ then the injectivity radius of $M$ at $x$ is at most $r$.
\end{definition}

In particular, if $B(x, r)$ is contained in an $r$-packing, then $\dist(x, \partial W) \geq r$.
An $r$-packing $\mathscr P$ of $W$ is \dfn{maximal} if there does not exist an $r$-packing $\mathscr Q$ of $W$ with $\mathscr Q \supset \mathscr P$.
Every packing is contained in a maximal packing, by Zorn's lemma.

Let $d := \dim M$.
The \dfn{$d - 1$-dimensional upper Minkowski content} of a set $X \subseteq M$ is 
$$\mathcal M^{d - 1}(X) := \limsup_{r \to 0} \frac{\vol(\{x \in M: \dist(x, X) < r\})}{2r}.$$
In particular, if $X$ is a smooth submanifold of $M$ of dimension $d - 1$, then $\mathcal M^{d - 1}(M)$ is the surface area of $M$.

\subsection{Maxima of upper-semicontinous functions}
Let $S$ be a compact Hausdorff space.
If $f: S \to \RR$ is an upper-semicontinuous function, then $f$ attains its maximum somewhere on $S$.
We want to show that we can compute the maximum of $f$ by testing against finite subsets of $S$.

We refer the reader to \cite{kelley1975general} for a review of nets in point-set topology.
The set $\Fin(S)$ of finite subsets of $S$ is a directed set under $\subseteq$, so we can think of a function $\Theta: \Fin(S) \to \RR$ as a net.
By definition, the equation
$$L = \lim_{X \in \Fin(S)} \Theta(X)$$
means that for every $\varepsilon > 0$ there exists $X \in \Fin(S)$ such that if $Y \in \Fin(S)$ and $X \subseteq Y$, then
$$|\Theta(Y) - L| < \varepsilon.$$
From this definition it follows that for any upper-semicontinuous function $f: S \to \RR$,
\begin{equation}\label{maximum is limit of finite sets}
    \max_{x \in S} f(x) = \lim_{X \in \Fin(S)} \max_{x \in X} f(x).
\end{equation}

\subsection{Lipschitz maps}
Let $X$ be a metric space, and $u: X \to N$ a Lipschitz map.
Then the \dfn{local Lipschitz constant} is 
$$Lu(x) := \limsup_{r \to 0} \Lip(u, B_X(x, r)).$$
Since $\Lip(u, B_X(x, r))$ is a nondecreasing function of $r$, this limit superior is actually a limit and an infimum, and the function $Lu$ is upper-semicontinuous.
It is easily seen for fixed $x$ that $u \mapsto Lu(x)$ is a seminorm on the space of Lipschitz maps.
If $X$ is a smooth manifold, then for almost every $x \in X$, $Lu(x) = |\dif u(x)|_\infty$, where $|\cdot|_\infty$ is the operator norm on matrices, and $\dif u \in \Hom(TX, u^*(TN))$ is the derivative of $u$.
We refer the reader to \cite[\S4]{Crandall2008} for a more careful discussion of the local Lipschitz constant.

\begin{lemma}
Let $K$ be a convex compact subset of $M$ and $u: K \to N$ a Lipschitz map.
Then
\begin{equation}\label{Lip is sup of local Lips}
\Lip(u, K) = \max_{z \in K} Lu(z).
\end{equation}
\end{lemma}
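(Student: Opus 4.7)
The inequality $\max_{z \in K} Lu(z) \leq \Lip(u, K)$ is essentially definitional: for each $z$ and $r > 0$ we have $\Lip(u, B_K(z, r)) \leq \Lip(u, K)$, so letting $r \to 0$ gives $Lu(z) \leq \Lip(u,K)$. Since $Lu$ is upper-semicontinuous and $K$ is compact, the maximum on the left is attained. The content of the lemma is therefore the reverse inequality, and the plan is to prove it by a standard chaining argument along geodesics.

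Set $C := \max_{z \in K} Lu(z)$ and fix $\varepsilon > 0$. Since $Lu(z) \leq C$, the definition of $Lu$ as a limit (and infimum) of $\Lip(u, B_K(z,r))$ yields, for each $z \in K$, some radius $r_z > 0$ with $\Lip(u, B_K(z, r_z)) \leq C + \varepsilon$. The family $\{B(z, r_z/2)\}_{z \in K}$ is an open cover of the compact set $K$, hence admits a Lebesgue number $\delta > 0$ when restricted to any compact subset of $K$.

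Now fix $x, y \in K$. By convexity of $K$, there is a minimizing geodesic $\gamma: [0, L] \to K$ from $x$ to $y$ parametrized by arclength with $L = \dist(x, y)$. Partition $[0, L]$ into subintervals $0 = t_0 < \cdots < t_n = L$ of length less than $\delta$, so that each segment $\gamma([t_i, t_{i+1}])$ lies inside some $B(z_i, r_{z_i})$ on which $u$ has Lipschitz constant at most $C + \varepsilon$. Then
\begin{equation*}
    \dist(u(x), u(y)) \leq \sum_{i=0}^{n-1} \dist(u(\gamma(t_i)), u(\gamma(t_{i+1}))) \leq (C + \varepsilon) \sum_{i=0}^{n-1} (t_{i+1} - t_i) = (C + \varepsilon) \dist(x, y).
\end{equation*}
Letting $\varepsilon \to 0$ gives $\Lip(u, K) \leq C$, completing the proof.

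The only delicate point is ensuring that the chain of balls really connects $x$ to $y$ through $K$, which is why we need $K$ to be convex: without convexity, the straight-line path between two points of $K$ might leave $K$, and one would have to replace $\Lip(u, B_K(z, r_z))$ with control along curves entirely contained in $K$. Compactness of $K$ (via the Lebesgue number lemma applied to the image of $\gamma$) gives the finite subcover needed to make the chaining argument rigorous.
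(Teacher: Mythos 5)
Your proof is correct and follows essentially the same route as the paper: both reduce the nontrivial inequality to controlling $u$ along a minimizing geodesic from $x$ to $y$, which lies in $K$ by convexity. The only difference is that the paper cites the estimate $\dist_N(u(x), u(y)) \leq \max_{z \in \ell} Lu(z) \, \dist_M(x, y)$ from Crandall's survey, whereas you prove it directly by a Lebesgue-number chaining argument along the geodesic; both are fine.
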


In this lemma, we are taking $Lu$ to be computed in the metric space $K$.
In other words,
$$Lu(z) = \lim_{r \to 0} \Lip(u, B_K(z, r)) = \lim_{r \to 0} \Lip(u, K \cap B(z, r)).$$

\begin{proof}
Clearly for any $x \in K$, $Lu(x) \leq \Lip(u, K)$.
Conversely, let $x, y \in K$.
Since $K$ is convex, there is a minimizing geodesic $\ell \subseteq K$ from $x$ to $y$.
Then $\ell \subseteq K$, and by \cite[Lemma 4.2(d)]{Crandall2008},
\begin{align*}
    \dist_N(u(x), u(y)) \leq \max_{z \in \ell} Lu(z) \dist_M(x, y) \leq \max_{z \in K} Lu(z) \dist_M(x, y).
\end{align*}
The proof of the cited lemma goes through when $Lu(z)$ is computed in the metric space $\ell$, not in the ambient metric space $M$.
Similarly, the proof assumes that $N = \RR$, but the proof goes through generally.
\end{proof}

Let $K > 0$ and let $(u_n)$ be a sequence of maps on a compact metric space $X$, such that $u_n \to u$ pointwise and $\Lip(u_n, X) \leq K$.
Then it follows from the Arzela-Ascoli theorem that $u_n \to u$ uniformly and 
\begin{equation}\label{bound on limit Lip}
    \Lip(u, X) \leq K.
\end{equation}
In fact, we can do better, and show that $L$ is lower-semicontinuous:

\begin{lemma}\label{Lip is lower semicontinuous}
Suppose that $u_k, u: M \to N$ are maps with $u_k \to u$ in $L^1$ and $\Lip(u_k, U) \leq C$.
Then for almost every $x \in M$,
$$Lu(x) \leq \liminf_{k \to \infty} Lu_k(x).$$
\end{lemma}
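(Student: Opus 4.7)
The plan is to reduce the pointwise a.e.\ bound to the global lower-semicontinuity inequality (\ref{bound on limit Lip}) applied on small convex geodesic balls, using Lemma~\ref{Lip is sup of local Lips} to move between the global Lipschitz constant on each such ball and its pointwise local Lipschitz constants.

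First, I would upgrade $L^1$-convergence to uniform convergence on compact subsets of $U$. The equi-Lipschitz bound makes $\{u_k\}$ equicontinuous on $U$, and pointwise boundedness at a single point (obtained by passing to a subsequence with $u_k \to u$ pointwise a.e., then picking a point of convergence) combined with the equi-Lipschitz bound gives uniform boundedness on every compact $K \Subset U$. Arzel\`a--Ascoli applied to each subsequence yields a further uniformly convergent sub-subsequence whose limit must coincide with $u$ by uniqueness of the $L^1$ limit, and the standard subsequence-of-subsequence principle then forces uniform convergence of the full sequence on every such $K$.

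Second, fix $x \in U$ and $r > 0$ small enough that $\bar B(x, r) \Subset U$ is geodesically convex. Uniform convergence on $\bar B(x,r)$ together with (\ref{bound on limit Lip}) applied to the restrictions $u_k|_{\bar B(x, r)}$ gives
\[
\Lip\bigl(u, \bar B(x, r)\bigr) \;\leq\; \liminf_{k \to \infty} \Lip\bigl(u_k, \bar B(x, r)\bigr),
\]
and (\ref{Lip is sup of local Lips}), applied to both sides on the convex compact set $\bar B(x, r)$, rewrites this as
\[
\max_{z \in \bar B(x, r)} Lu(z) \;\leq\; \liminf_{k \to \infty} \; \max_{z \in \bar B(x, r)} Lu_k(z).
\]
Sending $r \to 0$, the upper-semicontinuity of $Lu$ shows that the left-hand side converges to $Lu(x)$, for \emph{every} $x$.

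The main obstacle, which I expect to be the hardest step, is controlling the right-hand side as $r \to 0$: naively exchanging the infimum over $r$ with $\liminf_{k}$ goes in the wrong direction. To recover the inequality $Lu(x) \leq \liminf_k Lu_k(x)$ on a co-null set, I would restrict attention to points $x$ lying simultaneously in the (full-measure) Lebesgue sets of every $Lu_k$ and of $\liminf_k Lu_k$, and use a diagonal argument: along a subsequence $k_j \to \infty$ realizing $\liminf_k Lu_k(x)$, select radii $r_j \to 0$ quickly enough (depending on $k_j$ and on the modulus of regularity of $Lu_{k_j}$ at $x$) so that $\max_{\bar B(x, r_j)} Lu_{k_j}$ is comparable to $Lu_{k_j}(x)$. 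Passing to the limit along this diagonal subsequence, combined with (\ref{maximum is limit of finite sets}) to reduce maxima over balls to maxima over countable subsets (so that the co-null set on which the argument works is independent of the ball), should close the argument.
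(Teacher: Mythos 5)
There is a genuine gap, and it sits exactly at the step you flag as the hardest one. Your reduction gives, for each \emph{fixed} $r$, the inequality $\max_{z \in \bar B(x,r)} Lu(z) \leq \liminf_k \max_{z \in \bar B(x,r)} Lu_k(z)$, and you must then pass to $r \to 0$ on the right-hand side. The diagonal selection you describe is circular in its quantifiers: to make $\max_{\bar B(x,r_j)} Lu_{k_j}$ close to $Lu_{k_j}(x)$ you must choose $r_j$ small \emph{after} $k_j$ is fixed, but to transfer the bound $\Lip(u, \bar B(x,r_j)) \leq \liminf_k \Lip(u_k, \bar B(x,r_j))$ to the single index $k_j$ you must choose $k_j$ large \emph{after} $r_j$ is fixed (the rate of the liminf depends on $r_j$). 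Restricting to Lebesgue points does not break this circle, because Lebesgue points control \emph{averages} of $Lu_{k}$ and of $\liminf_k Lu_k$ over balls, whereas your argument runs through the \emph{maximum} $\max_{\bar B(x,r)} Lu_k = \Lip(u_k, \bar B(x,r))$, which an upper-semicontinuous function can make far larger than its average on every ball around $x$. The example $u_k(y) = \max(|y| - 1/k, 0)$ on $\RR$, with $u(y) = |y|$, shows the inequality genuinely fails at $x = 0$ (where $Lu_k(0) = 0$ for all $k$ but $Lu(0) = 1$), so any correct proof must isolate the exceptional null set through a mechanism that sees averages rather than suprema; your sketch never supplies that mechanism.

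For contrast, the paper avoids maxima entirely: it writes $Lu(x)$ and $Lu_k(x)$ as limits of ball averages of $|\dif u|_\infty$ and $|\dif u_k|_\infty$ via the Lebesgue differentiation theorem, uses lower semicontinuity of the $|\cdot|_\infty$-total variation under $L^1$ convergence to compare the averages $\dashint_{B(x,r)} |\dif u|_\infty \leq \liminf_k \dashint_{B(x,r)} |\dif u_k|_\infty$, and only then sends $r \to 0$, using the uniform bound $Lu_k \leq C$ to interchange limits. Everything there is an averaging statement, which is why the conclusion is naturally an almost-everywhere one. If you want to keep the spirit of your approach, the fix is to replace $\Lip(u, \bar B(x,r)) = \max_{\bar B(x,r)} Lu$ by the integral estimate $\dist_N(u(y), u(z)) \leq \int_{[y,z]} Lu(w)\,\dif w$ (as in the proof of Lemma \ref{lower semicontinuity on average}) so that the quantity you pass to the limit in $k$ is an average of $Lu_k$ over $B(x,r)$ rather than a maximum; but that is essentially reconstructing the paper's argument.
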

\begin{proof}
We first use the Lebesgue differentiation theorem to write, for almost every $x \in M$,
$$Lu(x) = \lim_{r \to 0} \dashint_{B(x, r)} Lu(y) \dif y = \lim_{r \to 0} \dashint_{B(x, r)} |\dif u(y)|_\infty \dif y.$$
We now use lower-semicontinuity of $|\cdot|_\infty$-total variation to bound 
$$\dashint_{B(x, r)} |\dif u(y)|_\infty \dif y \leq \liminf_{k \to \infty} \dashint_{B(x, r)} |\dif u_k(y)|_\infty \dif y = \liminf_{k \to \infty} \dashint_{B(x, r)} Lu_k(y) \dif y.$$
The lower-semicontinuity of $|\cdot|_\infty$-total variation can be proven, analogously to the scalar case \cite[Theorem 1.9]{Giusti1984minimal}, using the duality between the norms $\||\cdot|_\infty\|_{L^1}$ and $\||\cdot|_1\|_{L^\infty}$ on matrix-valued functions (where $|\cdot|_1$ is the trace norm), and we omit the details.

Because $Lu_k(y) \leq \Lip(u_k, U) \leq C$, we can apply dominated convergence, and then use the Lebesgue differentiation theorem again: for almost every $x \in M$,
\begin{align*}
    \lim_{r \to 0} \liminf_{k \to \infty} \dashint_{B(x, r)} Lu_k(y) \dif y &= \lim_{r \to 0} \dashint_{B(x, r)} \liminf_{k \to \infty} Lu_k(y) \dif y = \liminf_{k \to \infty} Lu_k(x). \qedhere
\end{align*}
\end{proof}

Let $u$ be a Lipschitz map.
Though $Lu$ is not lower-semicontinuous, it satisfies the following ``averaged" version of lower-semicontinuity, which was implicit in \cite[\S1.4.2]{Sheffield12}:

\begin{lemma}\label{lower semicontinuity on average}
Let $u: M \to N$ be a Lipschitz map, and let $\psi: M \to \RR_+$ a lower-semicontinuous function.
If $Lu \leq \psi$ on almost all of $M$, then $Lu \leq \psi$ on $M$.
\end{lemma}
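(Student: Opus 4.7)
My plan is, for each fixed $x_0 \in M$, to pivot from the pointwise value $Lu(x_0)$ to the essential supremum of $Lu$ on a small geodesically convex ball around $x_0$, where the a.e.\ hypothesis can be applied directly, and then shrink the ball. The identification of the pointwise maximum of $Lu$ with its essential supremum on a convex ball is the crux, since it transfers the pointwise hypothesis to an a.e.\ statement (where the assumption lives).

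Fix $x_0 \in M$ and choose $r > 0$ smaller than the injectivity radius of $M$ at $x_0$, so that $\overline{B}(x_0, r)$ is compact and geodesically convex. By Lemma~\ref{Lip is sup of local Lips},
$$Lu(x_0) \leq \max_{z \in \overline{B}(x_0, r)} Lu(z) = \Lip(u, \overline{B}(x_0, r)).$$
Rademacher's theorem, together with the preliminary identity $Lu = |\dif u|_\infty$ a.e.\ and the standard characterization $\Lip(u, K) = \esssup_K |\dif u|_\infty$ for convex $K$ (proved by integrating $\dif u$ along minimizing geodesics and dividing by length), yields
$$\Lip(u, \overline{B}(x_0, r)) = \esssup_{z \in B(x_0, r)} Lu(z).$$
The a.e.\ hypothesis then gives
$$Lu(x_0) \leq \esssup_{z \in B(x_0, r)} \psi(z) \leq \sup_{z \in B(x_0, r)} \psi(z).$$

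The proof closes by letting $r \to 0$ and invoking semicontinuity of $\psi$ at $x_0$ to collapse $\lim_{r \to 0} \sup_{B(x_0, r)} \psi(z)$ down to $\psi(x_0)$. This closing step is where I expect the real difficulty: the implication $\sup_{B(x_0, r)} \psi \to \psi(x_0)$ as $r \to 0$ is exactly upper-semicontinuity of $\psi$ at $x_0$, and the proof above concludes immediately in that case; under the stated lower-semicontinuity hypothesis the essential supremum of $\psi$ on shrinking balls can remain bounded away from $\psi(x_0)$, so additional information about $\psi$ or about the structure of $Lu$ at $x_0$ would be needed to close the argument. Everything preceding the final limit---the reduction to a small convex ball via Lemma~\ref{Lip is sup of local Lips}, and the coincidence of $\max Lu$ with $\esssup Lu$ via Rademacher---is routine and does not depend on the flavor of semicontinuity.
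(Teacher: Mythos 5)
You have correctly diagnosed the central issue, and the fault is not in your argument but in the paper's statement of the hypothesis. The lemma is in fact \emph{false} with ``lower-semicontinuous'': take $M = \RR^2$, $N = \RR$, $u(x,y) := x$, so that $Lu \equiv 1$, and let $\psi := 1$ on $\RR^2 \setminus \{0\}$ with $\psi(0) := 0$. This $\psi$ is lower-semicontinuous and $Lu \leq \psi$ almost everywhere, yet $Lu(0) = 1 > 0 = \psi(0)$. The intended hypothesis is evidently \emph{upper}-semicontinuity (or continuity, which is what holds in every application of the lemma later in the paper). Indeed the paper's own proof begins by choosing a neighborhood $V$ of $x$ on which $\psi(y) \leq \psi(x) + \varepsilon$, which is exactly upper-semicontinuity of $\psi$ at $x$; the word ``lower'' in the statement is simply a typo.

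Granting that correction, your proof is sound and takes a genuinely different, shorter route than the paper's. You reduce $Lu(x_0)$ to $\Lip(u, \overline B(x_0,r))$ via Lemma~\ref{Lip is sup of local Lips}, identify the latter with $\esssup_{B(x_0,r)} Lu$ through Rademacher and the convexity of the ball, apply the a.e.\ hypothesis, and shrink $r$. The paper instead realizes $\Lip(u, V)$ as the $L^\infty$ norm of the difference quotient $f$ on $V^2$, writes $\|f\|_{L^\infty} = \lim_p \|f\|_{L^p}$, bounds $f(y,z)$ by an average of $Lu$ over the geodesic $[y,z]$, and uses Fubini and Jensen to convert the a.e.\ bound into an $L^p$ bound. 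The two arguments are morally the same: the Fubini/Jensen machinery in the paper is precisely what one needs in order to make rigorous the identity $\Lip(u,K) = \esssup_K |\dif u|_\infty$ on convex $K$, since a single geodesic is a null set and one must average over a family of them. You invoke that identity as a black box; if you want your proof to be fully self-contained you should either cite a reference for it or reproduce the Fubini-over-geodesics argument, but as an appeal to a standard fact your version is correct and appreciably cleaner than the paper's.
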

\begin{proof}
Let $\varepsilon > 0$ and $x \in M$.
Then there is an open neighborhood $V$ of $x$ such that:
\begin{enumerate}
    \item If $y \in V$, then $\psi(y) \leq \psi(x) + \varepsilon$.
    \item The image $u_*(V)$ is contained in a convex subset of $N$.
\end{enumerate}
Let $\Delta \subset V^2$ be the diagonal over $V$.
Then the difference quotient
$$f(y, z) := \frac{\dist_N(u(x), u(y))}{\dist_M(x, y)}$$
satisfies
$$Lu(x) \leq \Lip(u, V) \leq \sup_{y, z \in V^2 \setminus \Delta} f(y, z). $$
Since $\Delta$ is a null subset of $V^2$ and $f$ is continuous on $V^2 \setminus \Delta$, this supremum is the $L^\infty$ norm of $f$.

Given $y, z \in V$, let $[y, z]$ be a minimizing geodesic from $y$ to $z$.
We estimate using the mean value and Jensen inequalities
\begin{align*}
    \|f\|_{L^\infty}
    &= \lim_{p \to \infty} \|f\|_{L^p} \\
    &\leq \lim_{p \to \infty} \left[\iint_{V^2} \frac{1}{\dist_M(y, z)^p} \int_{[y, z]} Lu(w)^p \dif w \dif y \dif z\right]^{1/p} \\
    &\leq \lim_{p \to \infty} \left[\iint_{V^2} \dashint_{[y, z]} Lu(w)^p \dif w \dif y \dif z\right]^{1/p}.
\end{align*}
Let $E(y, w)$ be the set of points in $V$ on the line $\ell$ through $y, w$ which are to the right of $w$, where $\ell$ is oriented so that the vector $w - y$ points to the right.
Then $w \in [y, z]$ iff $z \in E(y, w)$, so we can rewrite the bounds of integration using Fubini's theorem as follows:
$$\iint_{V^2} \dashint_{[y, z]} Lu(w)^p \dif w \dif y \dif z = \iint_{V^2} \int_{E(y, w)} Lu(w)^p \frac{\dif z}{\dist_M(y, z)} \dif y \dif w.$$
The point is that $w$ now ranges over $V$, and we know that for almost every $w \in V$,
$$Lu(w) \leq \psi(x) + \varepsilon.$$
We therefore can estimate this integral, and undo the change in the order of integration:
\begin{align*} 
\iint_{V^2} \int_{E(y, w)} Lu(w)^p \frac{\dif z}{\dist_M(y, z)} \dif y \dif w
&\leq (\psi(x) + \varepsilon)^p \iint_{V^2} \int_{E(y, w)} \frac{\dif z}{\dist_M(y, z)} \dif y \dif w \\
&= (\psi(x) + \varepsilon)^p \iint_{V^2} \dashint_{[y, z]} \dif w \dif y \dif z \\
&= (\psi(x) + \varepsilon)^p \vol(V)^2.
\end{align*}
In other words, we can estimate for any $\varepsilon > 0$,
\begin{align*}
    Lu(x) &\leq \|f\|_{L^\infty} \leq (\psi(x) + \varepsilon) \lim_{p \to \infty} \vol(V)^{2/p} = \psi(x) + \varepsilon. \qedhere
\end{align*}
\end{proof}

Finally, we observe that for functions on $\RR$, the Lipschitz constant is preserved by taking maxima:

\begin{lemma}\label{Lip of max}
Let $I \subseteq \RR$ be an interval and $f, g: I \to \RR$ Lipschitz functions.
Then 
$$\Lip(\max(f, g), I) \leq \max(\Lip(f, I), \Lip(g, I)).$$
\end{lemma}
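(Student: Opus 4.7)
The plan is to prove this by a straightforward case analysis on which of $f, g$ realizes the maximum at each endpoint of a pair. Set $L := \max(\Lip(f,I), \Lip(g,I))$ and fix $x, y \in I$; the goal is to show
$$|\max(f,g)(x) - \max(f,g)(y)| \leq L\,|x - y|.$$

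Without loss of generality, assume $\max(f,g)(x) \geq \max(f,g)(y)$, and choose notation so that $\max(f,g)(x) = f(x)$ (the case $\max(f,g)(x) = g(x)$ is handled by swapping the roles of $f$ and $g$). Then $\max(f,g)(y) \geq f(y)$ by definition of the pointwise maximum, so
$$0 \leq \max(f,g)(x) - \max(f,g)(y) \leq f(x) - f(y) \leq \Lip(f, I)\,|x - y| \leq L\,|x - y|,$$
which is the required bound.

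I expect no real obstacle: the only subtle point is resisting the temptation to use the identity $\max(f,g) = \tfrac{1}{2}(f + g + |f - g|)$, which yields only the weaker estimate $\Lip(\max(f,g), I) \leq \Lip(f,I) + \Lip(g,I)$. The case-analysis argument above gives the sharp bound directly. Note also that the hypothesis $I \subseteq \RR$ (as opposed to a general metric space) is not actually used in this argument; what matters is only that $\max(f,g)$ is well-defined as a pointwise maximum of real-valued functions.
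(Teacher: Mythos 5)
Your proof is correct, and it takes a genuinely different route from the paper. The paper argues by cases on which of $f,g$ attains the maximum at each of the two endpoints; in the ``mixed'' case ($f(x) > g(x)$ but $g(y) > f(y)$) it invokes the intermediate value theorem to produce a crossing point $z \in (x,y)$ with $f(z) = g(z)$ and then splits the increment at $z$. That argument genuinely uses that $I$ is an interval of $\RR$ (connectedness and the order structure are needed both to find $z$ and to write $|x-z| + |z-y| = |x-y|$). Your argument sidesteps the crossing point entirely: after normalizing so that $h(x) \geq h(y)$ and $h(x) = f(x)$, the one-line chain $0 \leq h(x) - h(y) \leq f(x) - f(y) \leq \Lip(f,I)\,|x-y|$ closes the estimate, with the only input being the trivial inequality $h(y) \geq f(y)$. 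This is both shorter and strictly more general --- as you note, it proves the lemma for real-valued Lipschitz functions on an arbitrary metric space, whereas the paper's proof is tied to the interval. Since the lemma is only applied in the paper to intervals (in the proof of the claim inside Lemma \ref{improvement at one point}), both versions suffice for the application, but yours is the cleaner argument. Your side remark about the identity $\max(f,g) = \tfrac12(f+g+|f-g|)$ yielding only the additive bound is also accurate.
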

\begin{proof}
Let $h := \max(f, g)$, $K := \max(\Lip(f, I), \Lip(g, I))$, and $x, y \in I$.
We may assume that $x < y$.
We have a suitable estimate on $|h(x) - h(y)|$ if both $f(x) \geq g(x)$ and $f(y) \geq g(y)$, or if $g(x) \geq f(x)$ and $g(y) \geq f(y)$.
So we may assume that $f(x) > g(x)$ and $g(y) > f(y)$.
Then there exists $z \in (x, y)$ such that $f(z) = g(z)$, and we estimate 
\begin{align*}
    |h(x) - h(y)| &= |f(x) - g(y)| \leq |f(x) - f(z)| + |g(z) - g(y)| \\
    &\leq K|x - z| + K|z - y| = K|x - y|. \qedhere
\end{align*}
\end{proof}

\section{Proof of Theorem \ref{eikonal extension}}
\subsection{Modifying the Lipschitz constant on a packing}
We want to show that given a packing $\mathscr P$ and a Lipschitz map $u$, we can modify $u$ along a line segment close to the center of each ball in $\mathscr P$, while ensuring that $u$ is held fixed away from the centers of the balls in $\mathscr P$.
Recalling our notation $\Phi(B(x, r)) := B(x, r^2)$, our goal in this section is to prove:

\begin{proposition}\label{improvement on packing}
Let $V \subseteq M$ be an open set, $0 < r \leq 1$, and $\psi: V \to \RR_+$ a continuous function.
Let $u: V \to N$ satisfy $Lu \leq \psi$.
Then for every $r$-packing $\mathscr P$ of $V$, there exists $v: V \to N$ such that:
\begin{enumerate}
\item For every $x \in V$, $Lv(x) \leq \psi(x)$.
\item For every $x \in V \setminus \bigcup \{\Phi(B): B \in \mathscr P\}$, $v(x) = u(x)$.
\item For every $\varepsilon > 0$ and every $B \in \mathscr P$, there exists a closed line segment $\ell \Subset \Phi(B)$ such that 
$$\Lip(v, \ell) > \max_{x \in \ell} \psi(x) - \varepsilon.$$
\item $v$ is homotopic to $u$ relative to $\partial V$.
\end{enumerate}
\end{proposition}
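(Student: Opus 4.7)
First I would reduce to a single ball. Since the balls $B \in \mathscr P$ are disjoint and $r \leq 1$, the sets $\Phi(B) = B(x, r^2) \subseteq B$ are also pairwise disjoint, so setting $v := u$ outside $\bigcup_B \Phi(B)$ reduces the problem to constructing, for each fixed $B \in \mathscr P$, a Lipschitz map $v$ on $\Omega := \Phi(B)$ that agrees with $u$ on $\partial \Omega$ and meets (1)--(4). Next comes a dichotomy: if $Lu \equiv \psi$ on $\Omega$, then (\ref{Lip is sup of local Lips}) implies $\Lip(u, \ell) = \max_\ell \psi$ for every compact convex segment $\ell \subset \Omega$, so $v := u$ trivially works. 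Otherwise, by upper-semicontinuity of $Lu$ and continuity of $\psi$, the open set $\{Lu < \psi\} \cap \Omega$ is nonempty, and I would pick a differentiability point $q$ in it (which exists by Rademacher); write $\sigma := Lu(q) < \psi^* := \psi(q)$.

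Now choose $s, \delta > 0$ small enough that normal coordinates at $q$ and at $u(q)$ are valid on $B(q, s) \Subset \Omega$ (with metric deviations $O(s^2)$), $\psi$ is $\delta$-close to $\psi^*$ on $B(q, s)$, and $Lu \leq \sigma + \delta$ on $B(q, s)$ (via USC of $Lu$). In these coordinates, align the $y_1$-axis with the principal singular direction of $\dif u(q)$, so that $\partial_{y_1} u(q) = \sigma f_0$ for a unit $f_0 \in T_{u(q)} N$. Pick a segment $\ell_0 \subset B(q, s)$ of length $L$ on this axis through $q$, and a tube $T := \ell_0 \times B_{d-1}(0, \rho) \subset B(q, s)$ with $L \ll \rho \ll s$; let $\ell \subset \ell_0$ be its middle third. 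The modification is
\[
v(y_1, y') := u(y_1, y') + \chi(y')\, h(y_1)\, f_0 \quad \text{on } T,
\]
with $v := u$ outside $T$, where $\chi \colon B_{d-1}(0, \rho) \to [0, 1]$ is a smooth radial cutoff with $\chi(0) = 1$, $\nabla \chi(0) = 0$, $\chi|_{\partial B_{d-1}(0, \rho)} = 0$, and $h \colon [0, L] \to \RR$ is a piecewise-linear sawtooth with $h(0) = h(L) = 0$ and $h' \equiv +\alpha$ on $\ell$ (otherwise $|h'| \equiv \alpha$), for the constant $\alpha := \psi^* - \sigma - 2\delta$. Condition (2) is immediate from the support; (4) holds via the straight-line homotopy $u_t := u + t\chi h f_0$ in the exponential chart, which is valid once $T$ and $u(T)$ fit inside convex normal neighborhoods.

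The main verification is $|\dif v|_\infty \leq \psi$ almost everywhere, which by Lemma \ref{lower semicontinuity on average} upgrades to the pointwise bound (1). Since $\dif v - \dif u = (\chi h' \dif y_1 + h \dif\chi) \otimes f_0$ is a rank-one perturbation of $\dif u$, the triangle inequality for the operator norm gives
\[
|\dif v|_\infty \leq Lu + \sqrt{(\chi h')^2 + (h |\nabla\chi|)^2}.
\]
On $\ell$, where $\chi = 1$ and $\nabla \chi = 0$, this reduces to $Lu + \alpha \leq (\sigma + \delta) + (\psi^* - \sigma - 2\delta) = \psi^* - \delta \leq \psi$. Off $\ell$ the term $h|\nabla\chi| \lesssim \alpha L/\rho$ is negligible for $L/\rho$ small, and the same bound holds. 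Condition (3) follows because $f_0$ is aligned with $\partial_{y_1} u(q)$ and $u$ is differentiable at $q$, so $\langle u(b) - u(a), f_0\rangle \geq \sigma|\ell| - o(|\ell|)$ for the endpoints $a, b$ of $\ell$, giving $|v(b) - v(a)| \geq (\sigma + \alpha)|\ell| - o(|\ell|)$ and hence $\Lip(v, \ell) \geq \psi^* - 2\delta - o(1)$; taking $\delta < \varepsilon/4$ and $L$ sufficiently small makes this exceed $\max_\ell \psi - \varepsilon$.

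The main obstacle is coordinating the smallness parameters: first $s$ is fixed so that the coordinate and continuity approximations are good; then $\delta$ is chosen $< \varepsilon/4$; then $\rho$ and $L$ are chosen with $L \ll \rho$ so that the off-$\ell$ error in $|\dif v|_\infty$ stays within $\delta$ of the on-$\ell$ bound. A related subtlety is that the aligned direction $f_0$ is only asymptotically the dominant direction of $\dif u$ near $q$, which is why differentiability (not merely Lipschitz continuity) at $q$ is needed to absorb the approximation error $o(|\ell|)$ in the lower bound on $\Lip(v, \ell)$.
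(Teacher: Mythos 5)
Your overall strategy---localize to one ball of the packing and superpose a compactly supported oscillation whose amplitude is calibrated against the available room $\psi - Lu$---is the same as the paper's, but there is a genuine gap in how you obtain condition (3), and it is exactly the point where the paper works hardest. Condition (3) quantifies over $\varepsilon$ \emph{after} $v$ is fixed: a single map $v$ must admit, for \emph{every} $\varepsilon > 0$, a segment $\ell$ with $\Lip(v,\ell) > \max_\ell \psi - \varepsilon$. Your construction fixes $\delta < \varepsilon/4$ and uses a sawtooth of slope $\alpha = \psi^* - \sigma - 2\delta$, so that $|\dif v|_\infty \leq \psi - \delta + o(1)$ on the tube and $\Lip(v,\ell) \approx \psi^* - 2\delta$ on the middle third. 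The resulting $v$ depends on $\varepsilon$, and for $\varepsilon' < \delta$ condition (3) fails for it: with $u = 0$ and $\psi \equiv 1$ (say $M = \RR^2$, $N = \RR$), your $v$ satisfies $Lv \leq 1 - \delta$ everywhere, so no segment at all has $\Lip(v,\ell) > 1 - \varepsilon'$. The paper avoids this by an intermediate value theorem in the amplitude parameter: it shows $t \mapsto \max_z (Lu_t(z) - \psi(z))$ is continuous (itself nontrivial, proved via Lemma \ref{Lip of max} and the net formula (\ref{maximum is limit of finite sets})), finds the exact $T$ where this maximum vanishes---hence a point $z$ with $Lv(z) = \psi(z)$ \emph{exactly}---and then extracts segments for every $\varepsilon$ by shrinking balls around $z$ (Lemma \ref{improvement on one ball}). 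Your argument could likely be repaired by running the construction at a sequence of disjointly supported locations with $\delta_k \to 0$ inside the same $\Phi(B)$, but as written the quantifiers do not match the statement.

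A second, smaller error: in your ``trivial'' branch you claim that $Lu \equiv \psi$ on $\Omega$ together with (\ref{Lip is sup of local Lips}) forces $\Lip(u,\ell) = \max_\ell \psi$ for every segment $\ell$. That is false: (\ref{Lip is sup of local Lips}) computes the local Lipschitz constant intrinsically on $\ell$, not in the ambient space (take $u(x,y) = y$ and $\ell$ horizontal, so $\Lip(u,\ell) = 0$ while $Lu \equiv 1$). The conclusion of that branch is still correct, but the right justification is again the shrinking-ball argument of Lemma \ref{improvement on one ball}: from $Lu(z) = \psi(z)$ at a single point $z$ one gets, for each $\varepsilon$, some segment near $z$ with the desired property, since otherwise $Lv(z) \leq \psi(z) - \varepsilon$.
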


We are going to work our way up to the proof of Proposition \ref{improvement on packing}.
First, we show how to improve $Lu$ at a single point in a ball $B$ without modifying $u$ away from $B$.
We use this to show to modify $u$ along a line segment inside a prescribed open set $W$ without modifying $u$ away from $W$.
By taking $W = \Phi(B)$ where $B \in \mathscr P$, we deduce Proposition \ref{improvement on packing}.

On Riemannian manifolds, we need the following lemma, which is essentially obvious on euclidean space:

\begin{lemma}\label{existence of oscillators}
Let $S \subseteq M$ be a ball, $D \geq 1$, and let $u: M \to N$ be a continuous map.
There exists a family of smooth maps $\chi_t: M \to \RR^D$ such that for each $t \geq 0$,
\begin{enumerate}
    \item $\Lip(\chi_t, S) \geq t$.
    \item $\supp \chi_t \subseteq S$.
    \item For each $s \geq 0$,
    \begin{equation}\label{continuous dependence in t}\Lip(\chi_t - \chi_s, S) \lesssim |t - s|
    \end{equation}
    where the implied constant only depends on the geometry of $N$ near the image of $S$ under $u$.
    \item For each $z \in S$, $|\chi_t(z)|$ is smaller than the injectivity radius of $N$ at $u(z)$.
\end{enumerate}
\end{lemma}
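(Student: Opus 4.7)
The plan is to build $\chi_t$ as a superposition of localized oscillators, each supported in a different sub-ball of $S$, with geometrically increasing Lipschitz constants but uniformly bounded amplitude; the time parameter $t$ merely selects which oscillator is ``active'' through a smooth one-dimensional partition of unity. Working in a coordinate chart on $M$ containing $\overline{S}$ (the Riemannian--Euclidean metric distortion is a harmless constant factor), I first fix $\eta > 0$ smaller than half the minimum of $z \mapsto \inj(N, u(z))$ on $\overline{S}$, which exists by continuity of $u$ and of the injectivity radius.

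Pick a sequence of pairwise disjoint closed balls $S_k \Subset S$ with centers $p_k$ and radii $r_k \sim 2^{-k}$, well-separated so that $\dist(S_k, S_j) \gtrsim \max(r_k, r_j)$; a concrete choice is $p_k := (1 - 2^{-k}) R e_1$ and $r_k := R 2^{-k-2}$, where $R$ is the radius of $S$. On each $S_k$ take a smooth oscillator $\zeta_k \colon M \to \RR^D$ supported in $S_k$ with $\|\zeta_k\|_\infty \leq \eta$ and $\Lip(\zeta_k) \asymp 2^{k+1}$, for instance $\zeta_k(x) := \eta\,\sigma(2^{k+1}(x^1 - p_k^1)/\eta)\,\rho_k(x)\,e_1$, where $\sigma \colon \RR \to [-1/2, 1/2]$ is a fixed smooth $1$-Lipschitz periodic function whose derivative attains $\pm 1$, $\rho_k$ is a smooth cutoff equal to $1$ on the concentric half-ball of $S_k$ and vanishing near $\partial S_k$, and $e_1$ is a unit vector in $\RR^D$. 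In the time variable, fix a smooth partition $\tau_k \colon [0,\infty) \to [0,1]$ with $\supp \tau_k \subseteq [2^{k-1}, 2^{k+2}]$, $\tau_k \equiv 1$ on $[2^k, 2^{k+1}]$, and $\Lip(\tau_k) \lesssim 2^{-k}$, arranged so every $t \geq 0$ lies in the interior of some $\{\tau_k = 1\}$ (with $\tau_0$ covering $[0,2]$). Define $\chi_t(x) := \sum_k \tau_k(t)\zeta_k(x)$. Because the $S_k$ are disjoint, at each $x$ at most one summand contributes; this immediately gives smoothness in $x$, the support property (2), and $|\chi_t(x)| \leq \eta$, establishing (4). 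For (1), given $t$ pick $k$ with $\tau_k(t) = 1$, so $\chi_t|_{S_k} = \zeta_k$ and $\Lip(\chi_t, S) \geq 2^{k+1} \geq t$.

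The main obstacle, as I expect, is (3): individually $\Lip(\chi_t)$ can be arbitrarily large, yet we demand $\Lip(\chi_t - \chi_s, S) \lesssim |t - s|$ uniformly in $t, s \geq 0$. Writing $\chi_t - \chi_s = \sum_k (\tau_k(t) - \tau_k(s))\zeta_k$, I would estimate the difference quotient casewise. For $x, y$ in the same $S_k$, the difference is $(\tau_k(t) - \tau_k(s))(\zeta_k(x) - \zeta_k(y))$, bounded by $\Lip(\tau_k)\Lip(\zeta_k)|t-s||x-y| \lesssim 2^{-k} \cdot 2^{k+1} |t-s||x-y|$, uniform in $k$. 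For $x \in S_k$ and $y$ outside $S_k$ (in another sub-ball or in a gap), the crucial observation is that $\zeta_k$ vanishes on $\partial S_k$, so $|\zeta_k(x)| \leq \Lip(\zeta_k)\dist(x, \partial S_k) \leq \Lip(\zeta_k)|x-y|$; hence $|(\tau_k(t) - \tau_k(s))\zeta_k(x)| \lesssim |t - s||x - y|$ with the same $2^{-k} \cdot 2^{k+1}$ cancellation, and adding the analogous bound from $y$'s side closes the case. The balance between amplitude $\sim \eta$, frequency $\sim 2^k/\eta$, and time-cutoff slope $\sim 2^{-k}$ is exactly what allows the two factors of $2^k$ to cancel; the implied constant only sees $\eta$, i.e., the geometry of $N$ near $u(S)$, as stipulated. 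This discrete-scales setup appears forced: a single oscillator $\eta\sin(tx^1/\eta)\phi(x)e_1$ with $t$-dependent frequency fails (3) because its $t$-derivative $x^1 \cos(tx^1/\eta)\phi(x)e_1$ is bounded only pointwise and not in $\Lip$, so one genuinely needs the oscillators at different scales to be carried in spatially disjoint regions.
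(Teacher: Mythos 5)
Your construction is correct, but it is genuinely different from the paper's, and in fact it proves a strictly stronger statement than the paper's own argument does. The paper uses a \emph{single} oscillator with $t$-dependent frequency, $\chi_t = \varphi\cdot\varepsilon_0\sin(10t\xi_1/\varepsilon_0)e_1$ in normal coordinates, and verifies (3) by bounding $\frac{\dif}{\dif t}\nabla\chi_t$. As you anticipated, that verification glosses over the product-rule term: $\frac{\dif}{\dif t}\bigl[10t\cos(10t\xi_1/\varepsilon_0)\bigr] = 10\cos(\cdot) - \frac{100t\xi_1}{\varepsilon_0}\sin(\cdot)$, whose second term is only $O(1+t)$, so the single oscillator yields $\Lip(\chi_t-\chi_s,S)\lesssim(1+\min(t,s))|t-s|$ rather than a uniform constant; indeed for large $t$ one can choose $s$ with $|t-s|\sim 1$ and $\Lip(\chi_t-\chi_s)\sim t$, so the uniform bound (3) as literally stated fails for the paper's $\chi_t$. (This does not damage the paper downstream, since the claim there only needs $h$ to be Lipschitz on each compact interval $[0,s]$, for which a $t$-dependent constant suffices.) Your dyadic superposition --- fixed oscillators $\zeta_k$ of frequency $\sim 2^k$ in disjoint sub-balls $S_k$, activated by a $t$-partition of unity with $\Lip(\tau_k)\lesssim 2^{-k}$ --- trades the simplicity of one formula for the exact cancellation $\Lip(\tau_k)\cdot\Lip(\zeta_k)=O(1)$, and your casewise difference-quotient estimate (same ball; one point inside $S_k$ and one outside, using that $\zeta_k$ vanishes on $\partial S_k$ so $|\zeta_k(x)|\le\Lip(\zeta_k)\dist(x,\partial S_k)\le\Lip(\zeta_k)|x-y|$) is complete, since at most one summand is nonzero at any point. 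Properties (1), (2), (4) are immediate as you say, and for fixed $t$ only finitely many $\tau_k(t)$ are nonzero, so $\chi_t$ is a finite sum and hence smooth with compact support in $S$.

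One caveat worth recording: although the lemma does not state it, the application in Lemma \ref{improvement at one point} implicitly requires $\chi_0=0$ (so that $u_0=u$, both for the homotopy and for the starting inequality $h(0)<0$). The paper's construction gives this for free; yours has $\chi_0=\zeta_0\neq 0$ because $\tau_0(0)=1$. The fix is one line: take $\tau_0$ with $\tau_0(0)=0$ and $\tau_0(t)\geq t/2$ on $[0,2]$ (e.g.\ a smooth version of $\min(t/2,1)$) with $\Lip(\zeta_0)=2$, so that $\Lip(\tau_0(t)\zeta_0,S)\geq t$ still holds for small $t$ while $\chi_0=0$.
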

\begin{proof}
Let $z_0$ be the center of $S$.
By replacing $S$ with a smaller ball centered on $z_0$ if necessary, we may assume that $S$ is the image of a ball in the tangent space $T_{z_0} M$ centered on $0$, such that the exponential map onto $S$ is a diffeomorphism which is close in $C^\infty$ to an isometry.
Furthermore, we may shrink $S$ so that there exists $\varepsilon_0 > 0$ such that for every $z \in S$, the injectivity radius of $N$ at $u(z)$ is at least $\varepsilon_0$.

For $\xi \in \RR^d$, define $f_t(\xi) \in \RR^D$ by
$$f_t(\xi) = \varepsilon_0 \left(\sin \left(\frac{10t\xi_1}{\varepsilon_0}\right), 0, \dots, 0\right).$$
By identifying $\RR^d$ with the tangent space $T_{z_0} M$, and pushing forward using the exponential map, we obtain a map $\tilde f_t$ defined on $S$.
Let $\varphi$ be a smooth, compactly supported function on $S$ which is $1$ near $z_0$, and let $\chi_t := \varphi \tilde f_t$.

It is clear that $\supp \chi_t \subseteq S$ and $|\chi_t(z)| \leq \varepsilon_0$ is smaller than the injectivity radius of $N$ at $u(z)$.
By (\ref{Lip is sup of local Lips}), and the bounds on the exponential map,
$$\Lip(\chi_t, S) \geq L\chi_t(z_0) \geq \frac{\varepsilon_0}{10} \frac{\dif}{\dif \xi_1} \sin\left(\frac{10t\xi_1}{\varepsilon_0}\right)\bigg|_{\xi_1 = 0} = t.$$

Next we use the bounds on the exponential map to bound
\begin{align*}
    \left|\frac{\dif}{\dif t} \nabla \chi_t(\xi)\right|
    &= |\varphi(\xi)| \left|\frac{\dif}{\dif t} \nabla \tilde f_t(\xi)\right| 
    \leq \frac{20}{\varepsilon_0} |\varphi(\xi)| \left|\frac{\dif}{\dif t} \cos\left(\frac{10 t\xi_1}{\varepsilon}\right)\right| 
    \leq \frac{200}{\varepsilon_0^2}
\end{align*}
(where we use the fact that $|\xi_1| \leq \varepsilon_0$).
Since this bound does not depend on $t$, we conclude (\ref{continuous dependence in t}) from the mean value theorem.
\end{proof}

\begin{lemma}\label{improvement at one point}
Suppose that $B(x, \delta) \Subset V$, and let $\psi: V \to \RR_+$ be a lower-semicontinuous function.
Let $u: V \to N$ satisfy $Lu \leq \psi$.
Then there exists $v: V \to N$ such that:
\begin{enumerate}
    \item For every $y \in V$, $Lv(y) \leq \psi(y)$.
    \item For every $y \in V \setminus B(x, \delta)$, $u(y) = v(y)$.
    \item There exists $z \in B(x, \delta)$ such that $Lv(z) = \psi(z)$.
    \item $v$ is homotopic to $u$ relative to $\partial V$.
\end{enumerate}
\end{lemma}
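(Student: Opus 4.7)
The plan is to perturb $u$ by adding a small oscillation to its values in $N$ via the exponential map, using Lemma \ref{existence of oscillators}, and then to extremize the amplitude of the oscillation until the bound $Lv \leq \psi$ saturates at some interior point. First, I dispose of the trivial case: if $Lu(z) = \psi(z)$ for some $z \in B(x, \delta)$, then $v := u$ satisfies (1)--(4). Otherwise $Lu < \psi$ everywhere on $B(x,\delta)$; since $Lu$ is upper-semicontinuous and $\psi$ is lower-semicontinuous, the set $\{Lu < \psi\}$ is open, so I can choose $z_0 \in B(x, \delta)$ and $\eta > 0$ small enough that $S := B(z_0, \eta)$ satisfies $\bar S \Subset B(x, \delta) \cap \{Lu < \psi\}$ and $u(\bar S)$ lies in a geodesically convex neighborhood of $N$ on which $TN$ admits a trivialization. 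The gap $g := \min_{\bar S}(\psi - Lu)$ is then strictly positive by compactness and semicontinuity.

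Apply Lemma \ref{existence of oscillators} with this $S$ and $D := \dim N$ to obtain oscillators $\chi_t : M \to \RR^D$ supported in $S$, and set
$$v_t(y) := \begin{cases} \exp_{u(y)}(\chi_t(y)), & y \in S, \\ u(y), & y \notin S, \end{cases}$$
interpreting $\chi_t(y) \in \RR^D$ as a tangent vector in $T_{u(y)} N$ via the chosen trivialization; the amplitude bound in Lemma \ref{existence of oscillators} ensures $v_t$ is well-defined. Property (3) of the oscillator lemma, together with smoothness of $\exp$, yields a uniform estimate $|Lv_t(y) - Lv_s(y)| \lesssim |t - s|$ on all of $V$. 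Define $T := \sup\{t \geq 0 : Lv_t \leq \psi \text{ on } V\}$: the gap $g$ combined with this estimate gives $T > 0$, while property (1) of the oscillator lemma together with a reverse-triangle-type lower bound forces $Lv_t(z_0) \gtrsim t - Lu(z_0)$, so $T < \infty$. Set $v := v_T$; conditions (1), (2), (4) then follow from the construction and the continuous family $\{v_{sT}\}_{s \in [0,1]}$ of maps agreeing with $u$ outside $S$.

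For (3), by the definition of $T$ I pick sequences $t_\varepsilon \downarrow T$ and $y_\varepsilon \in V$ with $Lv_{t_\varepsilon}(y_\varepsilon) > \psi(y_\varepsilon)$. Since $v_t = u$ on a neighborhood of any point outside $\bar S$, we have $Lv_{t_\varepsilon}(y) = Lu(y) \leq \psi(y)$ there, forcing $y_\varepsilon \in \bar S$; passing to a subsequence, $y_\varepsilon \to z \in \bar S \subset B(x, \delta)$. The Lipschitz-in-$t$ estimate, upper-semicontinuity of $Lv_T$, and lower-semicontinuity of $\psi$ then combine (via $\psi(z) \leq \liminf \psi(y_\varepsilon) \leq \liminf Lv_T(y_\varepsilon) \leq \limsup Lv_T(y_\varepsilon) \leq Lv_T(z)$) to yield $Lv(z) \geq \psi(z)$, which together with $Lv \leq \psi$ gives equality.

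The principal technical obstacle is the uniform Lipschitz-in-$t$ estimate for $Lv_t$: because the composition of the oscillator with $\exp$ is not a vector-space sum, one must combine property (3) of Lemma \ref{existence of oscillators} with bounds on the Jacobi equation to verify seminorm subadditivity (and the reverse-triangle lower bound used to force $T < \infty$) up to geometric error terms, which should be controllable by choosing $S$ sufficiently small.
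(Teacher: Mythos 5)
Your construction is essentially the paper's: you perturb $u$ by the oscillators of Lemma \ref{existence of oscillators} through the exponential map, use the uniform estimate $|Lv_t(y)-Lv_s(y)|\lesssim|t-s|$ coming from property (3) of that lemma, and increase the amplitude $t$ until the constraint $Lv_t\leq\psi$ saturates. Where you genuinely diverge is in how the saturation point is located. The paper proves that $h(t):=\max_{z\in S}\bigl(Lu_t(z)-\psi(z)\bigr)$ is continuous in $t$ --- this is the purpose of the finite-subset net identity (\ref{maximum is limit of finite sets}) together with Lemma \ref{Lip of max} --- and then applies the intermediate value theorem, so the touching point is simply the maximizer at time $T$ with $h(T)=0$. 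You instead set $T:=\sup\{t: Lv_t\leq\psi\}$, verify $0<T<\infty$ from the gap $g$ and the lower bound $Lv_t(z_0)\gtrsim t-Lu(z_0)$, and extract $z$ as a limit of near-violating points $y_\varepsilon\in\bar S$, closing with the sandwich between upper-semicontinuity of $Lv_T$ and lower-semicontinuity of $\psi$ (plus the Lipschitz-in-$t$ correction $C|t_\varepsilon-T|\to 0$, which your displayed chain suppresses but which you correctly name as an ingredient). Your route buys you independence from the entire ``maxima of upper-semicontinuous functions'' apparatus and from Lemma \ref{Lip of max}; the paper's route buys a cleaner statement ($h$ Lipschitz) that identifies $z$ without a subsequence extraction. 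Both arguments hinge on the same pointwise estimate $|Lv_t(y)-Lv_s(y)|\lesssim|t-s|$, and both share the same technical soft spot you flag at the end: $v_t-v_s$ is not literally $\chi_t-\chi_s$ because the normal coordinates are centered at the varying point $u(y)$, so the reverse triangle inequality for the seminorm $w\mapsto Lw(y)$ only holds up to distortion of the exponential map; the paper absorbs this into the implied constant by shrinking $S$, exactly as you propose. Your proof is correct modulo that shared gloss.
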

\begin{proof}
We may assume that $Lu < \psi$ on $B(x, \delta)$, or else there is nothing to show.
Let $S$ be the closure of $B(x, \delta/2)$, and let $D := \dim N$.
Let $\chi_t$ be as in Lemma \ref{existence of oscillators} applied to $S$ and $D$.
For each $z \in V$ and $t \geq 0$, we define $u_t(z)$ in normal coordinates centered at $u(z)$ by
$$u_t(z) := u(z) + \chi_t(z).$$
In other words, we identify the tangent space $T_{u(z)} N$ with $\RR^D$, and then $u_t(z)$ is the image of $\chi_t(z)$ under the exponential map $T_{u(z)} N \to N$.

\begin{claim}
    Let $$h(t) := \max_{z \in S} \left(Lu_t(z) - \psi(z)\right).$$
    Then $h$ is continuous.
\end{claim}
\begin{proof}[Proof of claim]
We first remark that $h$ is well-defined, since $Lu_t$ is upper-semicontinuous, $\psi$ is continuous, and $S$ is compact, so that the maximum is actually attained.

Let $z \in V$.
We can embed the space of maps $M \to B_N(u(z), \varepsilon)$, where $\varepsilon$ is smaller than the injectivity radius of $N$ at $u(z)$, in the space of maps $M \to T_{u(z)} N$.
Then we can think of $w \mapsto Lw(z)$ as a seminorm, and use the reverse triangle inequality to bound
$$|Lu_t(z) - Lu_s(z)| \leq |L(u_t - u_s)(z)| = |L(\chi_t - \chi_s)(z)|.$$
We then estimate using (\ref{Lip is sup of local Lips}) and the fact that $\Lip(\chi_t - \chi_s, M) \lesssim t - s$ that
\begin{equation}
    |Lu_t(z) - Lu_s(z)| \lesssim |t - s|. \label{base case of finite set induction}
\end{equation}

It is enough to show that for every $s > 0$, $h$ is Lipschitz continuous on $[0, s]$.
To this end, we claim that there exists $C > 0$ such that for every finite set $X \in \Fin(S)$,
\begin{equation}\label{conclusion of finite set induction}
\Lip\left(t \mapsto \max_{z \in X} \left(Lu_t(z) - \psi(z)\right), [0, s]\right) \leq C.
\end{equation}
We prove this by induction on $\card X$.
The base case $X := \{z\}$ is given by (\ref{base case of finite set induction}).
The inductive case is given by Lemma \ref{Lip of max}.

By (\ref{maximum is limit of finite sets}), for every $t \in \RR$,
$$h(t) = \lim_{X \in \mathrm{Fin}(S)} \max_{z \in X} \left(Lu_t(z) - \psi(z)\right),$$
so we conclude from (\ref{conclusion of finite set induction}) and (\ref{bound on limit Lip}), and the compactness of $[0, s]$, that
\begin{align*}
    \Lip(h, [0, s]) &\leq C. \qedhere
\end{align*}
\end{proof}

By assumption, $h(t) < 0$.
On the other hand, $h(t) \to \infty$ as $t \to \infty$.
So by the claim, there exists $T$ with $h(T) = 0$.
Let $v := u_T$ and let $z$ be the point which realizes the maximum in the definition of $h(T)$.
Then:
\begin{enumerate}
    \item For every $y \in V$,
    $$Lv(y) \leq h(T) + \psi(y) = \psi(y).$$
    \item Since $\chi_t$ is supported in $S$, which is a compact subset of $B(x, \delta)$ $u = v$ away from $B(x, \delta)$.
    \item By definition of $z$,
    $$Lv(z) = h(T) + \psi(z) = \psi(z).$$
    \item $(u_t)_{t \in [0, T]}$ is a homotopy between $u_0$ and $v$, which is a homotopy relative to the boundary since $u_t - u = \chi_t$ has compact support in $B(x, \delta)$.
\end{enumerate}
So $v$ is the desired map, witnessed by the point $z$.
\end{proof}

\begin{lemma}\label{improvement on one ball}
Let $W \subseteq V$ be an open set and $\psi: V \to \RR_+$ a continuous function.
Let $u: V \to N$ satisfy $Lu \leq \psi$.
Then there exists $v: V \to N$ such that:
\begin{enumerate}
\item For every $x \in V$, $Lv(x) \leq \psi(x)$. \label{one ball bound}
\item For every $x \in V \setminus W$, $v(x) = u(x)$. \label{one ball preservation}
\item For every $\varepsilon > 0$, there exists a closed minimizing geodesic segment $\ell \Subset W$ such that \label{one ball improvement}
$$\Lip(v, \ell) > \max_{x \in \ell} \psi(x) - \varepsilon.$$
\item $v$ is homotopic to $u$ relative to $\partial V$. \label{one ball homotopy}
\end{enumerate}
\end{lemma}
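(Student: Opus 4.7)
The plan is to reduce to Lemma \ref{improvement at one point}. Pick any ball $B(x_0, \delta) \Subset W$ and apply that lemma to obtain $v: V \to N$ whose modifications of $u$ are supported in $B(x_0, \delta)$. Since $B(x_0, \delta) \subseteq W$, conditions (\ref{one ball bound}), (\ref{one ball preservation}), and (\ref{one ball homotopy}) for $v$ with respect to $W$ follow immediately from the corresponding conclusions of Lemma \ref{improvement at one point} for $B(x_0, \delta)$. All the substantive work lies in condition (\ref{one ball improvement}), which demands a line segment of nearly optimal Lipschitz constant.

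To extract such a line segment, use the third conclusion of Lemma \ref{improvement at one point}, which provides a point $z \in B(x_0, \delta) \subseteq W$ with $Lv(z) = \psi(z)$. Given $\varepsilon > 0$, choose $r > 0$ small enough that: (a) $\overline{B(z, r)} \subseteq W$; (b) $B(z, r)$ is geodesically convex, which is possible since the convexity radius at $z$ is positive on any Riemannian manifold; (c) $\max_{x \in \overline{B(z,r)}} \psi(x) \leq \psi(z) + \varepsilon/3$, by continuity of $\psi$; and (d) $\Lip(v, B(z, r)) > \psi(z) - \varepsilon/3$, by the definition of $Lv(z)$. Pick $y_1, y_2 \in B(z, r)$ with difference quotient within $\varepsilon/3$ of $\Lip(v, B(z, r))$, and let $\ell$ be a minimizing geodesic from $y_1$ to $y_2$. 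By convexity of $B(z, r)$, $\ell \subseteq \overline{B(z,r)} \Subset W$.

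Combining (c), (d), and the choice of $y_1, y_2$ gives
\[
\Lip(v, \ell) \;\geq\; \frac{\dist_N(v(y_1), v(y_2))}{\dist_M(y_1, y_2)} \;>\; \psi(z) - \tfrac{2\varepsilon}{3} \;\geq\; \max_{x \in \ell} \psi(x) - \varepsilon,
\]
which is condition (\ref{one ball improvement}). The only mild obstacle is ensuring that $\ell$ remains in $W$; this is handled by the convexity radius step, which forces the minimizing geodesic between nearby points to stay inside the chosen small ball rather than wandering out through a nontrivial part of the manifold.
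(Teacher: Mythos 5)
Your proposal is correct and follows essentially the same route as the paper: apply Lemma \ref{improvement at one point} to a small ball inside $W$ to obtain a point $z$ with $Lv(z) = \psi(z)$, then use the convexity of small balls around $z$, the fact that $\Lip(v, B(z,r))$ approximates $Lv(z)$, and the continuity of $\psi$ to extract a nearly optimal geodesic segment. The only cosmetic difference is that the paper phrases this last step as a proof by contradiction over all segments in $B(z,\rho)$ as $\rho \to 0$, whereas you argue directly by choosing a near-extremal pair $y_1, y_2$; the two are contrapositives of each other.
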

\begin{proof}
Choose a ball $W' \Subset W$, and apply Lemma \ref{improvement at one point} to the ball $W'$ to obtain a map $v$ satisfying (\ref{one ball bound}), (\ref{one ball preservation}), and (\ref{one ball homotopy}), and a point $z \in W'$ such that $Lv(z) = \psi(z)$.
For each $\rho > 0$, let $\mathscr L_\rho$ be the set of closed line segments in $B(z, \rho)$.
To prove (\ref{one ball improvement}), we are going to apply compactness and contradiction in the limit $\rho \to 0$.

If (\ref{one ball improvement}) fails, then there exists $\varepsilon > 0$ such that for every $\rho > 0$ and every $\ell \in \mathscr L_\rho$,
$$\Lip(v, \ell) \leq \max_{x \in \ell} \psi(x) - \varepsilon.$$
So, since $B(z, \rho)$ is convex,
$$\Lip(v, B(z, \rho)) = \sup_{\ell \in \mathscr L_\rho} \Lip(v, \ell) \leq \sup_{x \in B(z, \rho)} \psi(x) - \varepsilon.$$
Since $\psi$ is upper-semicontinuous, and the limit in the definition of $Lv(z)$ is actually an infimum, it follows that 
$$Lv(z) \leq \Lip(v, B(z, \rho)) \leq \psi(z) - \varepsilon,$$
which is a contradiction since $Lv(z) = \psi(z)$.
\end{proof}

\begin{proof}[Proof of Proposition \ref{improvement on packing}]
Let $u_0 := u$.
Since $\mathscr P$ consists of disjoint open subsets of $W$, $\mathscr P$ is countable.
We enumerate it as
$$\mathscr P = \{B_i: i < \card \mathscr P\}.$$
If $i < \card \mathscr P$ and we are given $u_i$, let $u_{i + 1}$ be the map returned by Lemma \ref{improvement on one ball} when it is given the function $u_i$ and the ball $W_i := \Phi(B_i)$ as input.

If $\mathscr P$ is finite and $i := \card \mathscr P$, then the desired map is $u := u_i$.
Otherwise, for each fixed $i \in \NN$, $u_j|_{W_i}$ stabilizes when $j = i$.
Moreover, $u_j = u_0$ on $V \setminus \bigcup_i W_i$.
Therefore there is a map $u$ such that $u_j \to u$ locally uniformly, and $u$ has all desired properties.
\end{proof}

\subsection{Iterating the packing procedure}
We are going to iteratively apply Proposition \ref{improvement on packing} to finer and finer packings; at each dyadic scale, we get some improvement to a given Lipschitz map $u_0$.
Later we shall take a limit as the scale goes to $0$; it is important that we choose our packings so carefully that in the limit, our improvements do not cancel each other.

\begin{lemma}\label{iterating the packing}
Let $V \subseteq M$ be an open set such that $\vol(V) < \infty$, and $\psi: V \to \RR_+$ a continuous function. 
Let $u_0 : V \to N$ satisfy $Lu_0 \leq \psi$.
Let $\mathscr L_0 := \emptyset$.
Then there exist
\begin{enumerate}
\item maps $u_i: V \to N$ such that $Lu_i \leq \psi$,
\item finite sets $\mathscr L_i \supseteq \mathscr L_{i - 1}$, whose elements are closed minimizing geodesic segments,
\item and maximal $2^{-i}$-packings $\mathscr P_i$ of $V \setminus \bigcup \mathscr L_{i - 1}$,
\end{enumerate}
such that for every $i \geq 1$:
\begin{enumerate}
\item \label{balls and lines} There is a bijection
$$\ell: \mathscr P_i \to \mathscr L_i \setminus \mathscr L_{i - 1},$$
which sends $B \in \mathscr P_i$ to a line segment $\ell(B) \Subset \Phi(B)$ such that
$$\Lip(u_i, \ell(B)) > \max_{x \in \ell(B)} \psi(x) - 2^{-i}.$$
\item \label{preservation} For each $x \in V \setminus \bigcup \{\Phi(B): B \in \mathscr P_i\}$,
$$u_i(x) = u_{i - 1}(x).$$
\item \label{homotopy} $u_i$ is homotopic to $u_0$ relative to $\partial V$.
\end{enumerate}
\end{lemma}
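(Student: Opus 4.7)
The plan is to argue by induction on $i$, with $i=0$ given. At the inductive step, I have $u_{i-1}$, $\mathscr L_{i-1}$, $\mathscr P_{i-1}$ satisfying the asserted properties, and want to produce $u_i$, $\mathscr L_i$, $\mathscr P_i$. Because $\mathscr L_{i-1}$ is a finite union of closed line segments, the set $W_i := V \setminus \bigcup \mathscr L_{i-1}$ is open. By Zorn's lemma I can select a maximal $2^{-i}$-packing $\mathscr P_i$ of $W_i$. Since $\vol(V) < \infty$ and the balls in $\mathscr P_i$ are disjoint nonempty open subsets of $V$, $\mathscr P_i$ is at most countable; a standard volume-comparison argument applied to the balls $B \Subset W_i$ (using that the injectivity radius at each center is bounded by $2^{-i}$) shows that in fact $\mathscr P_i$ is finite, so that $\mathscr L_i$ defined below will also be finite.

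I now apply Proposition \ref{improvement on packing} with the open set $W_i$ in place of $V$, radius $r = 2^{-i}$, continuous function $\psi|_{W_i}$, Lipschitz map $u_{i-1}|_{W_i}$, packing $\mathscr P_i$, and tolerance $\varepsilon = 2^{-i}$. This produces a map $v: W_i \to N$ satisfying (i) $Lv \leq \psi$ on $W_i$; (ii) $v = u_{i-1}$ on $W_i \setminus \bigcup\{\Phi(B): B \in \mathscr P_i\}$; (iii) for each $B \in \mathscr P_i$, there is a closed line segment $\ell(B) \Subset \Phi(B)$ with
\[
\Lip(v, \ell(B)) > \max_{x \in \ell(B)} \psi(x) - 2^{-i};
\]
and (iv) $v$ is homotopic to $u_{i-1}|_{W_i}$ relative to $\partial W_i$. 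I then define $u_i := v$ on $W_i$ and $u_i := u_{i-1}$ on $\bigcup \mathscr L_{i-1}$, set $\mathscr L_i := \mathscr L_{i-1} \sqcup \{\ell(B): B \in \mathscr P_i\}$, and take the bijection $\ell$ to be the one just exhibited (well-defined since the dilated balls $\Phi(B)$ are pairwise disjoint, as $B$'s are disjoint and $\Phi(B) \subseteq B$ when $r \leq 1$).

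The four verifications are then routine. For continuity of $u_i$ at a point $x \in \bigcup \mathscr L_{i-1}$, observe that any $B \in \mathscr P_i$ has center at distance at least $2^{-i}$ from $\mathscr L_{i-1}$ (because $B \Subset W_i$), so $\Phi(B)$ lies at distance at least $2^{-i} - 4^{-i} > 0$ from $\mathscr L_{i-1}$; hence $u_i$ agrees with $u_{i-1}$ on an open neighborhood of $x$, giving continuity and simultaneously $Lu_i(x) = Lu_{i-1}(x) \leq \psi(x)$. On $W_i$, the bound $Lu_i \leq \psi$ is conclusion (i), the preservation property (\ref{preservation}) is conclusion (ii) together with the definition on $\mathscr L_{i-1}$, and property (\ref{balls and lines}) is conclusion (iii). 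For the homotopy (\ref{homotopy}), I extend the homotopy from (iv) by the constant map $u_{i-1}$ on $\bigcup \mathscr L_{i-1}$ to obtain a homotopy between $u_{i-1}$ and $u_i$ relative to $\partial V$, and concatenate it with the inductive homotopy between $u_0$ and $u_{i-1}$.

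The only genuinely delicate step is the finiteness of $\mathscr P_i$, which is needed so that $\mathscr L_i$ is a finite set of closed segments and hence $W_{i+1}$ is again open and of full measure. All the other ingredients are direct applications of Proposition \ref{improvement on packing} together with the observation that the neighborhoods of $\mathscr L_{i-1}$ are left untouched by the construction because of the dilation $\Phi$ built into the preservation clause.
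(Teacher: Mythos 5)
Your proposal is correct and follows essentially the same route as the paper: induct on $i$, note that $W_i = V\setminus\bigcup\mathscr L_{i-1}$ is open because $\mathscr L_{i-1}$ is a finite union of closed segments, pick a maximal (hence, by finite volume, finite) $2^{-i}$-packing, apply Proposition \ref{improvement on packing} with tolerance $2^{-i}$, and extend by $u_{i-1}$ across $\bigcup\mathscr L_{i-1}$, where the construction leaves a neighborhood untouched. Your write-up is in fact somewhat more careful than the paper's about the extension step and the finiteness of $\mathscr P_i$.
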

\begin{proof}
We proceed by induction. Assume that we have defined $u_{i - 1}, \mathscr L_{i - 1}, \mathscr P_{i - 1}$ to have the desired properties.
Let $W := V \setminus \bigcup \mathscr L_{i - 1}$.
Then $W$ is an open set since $\bigcup \mathscr L_{i - 1}$ is a finite union of closed sets.
So there is a maximal $2^{-i}$-packing $\mathscr P_i$ of $W$, which is moreover finite since $\vol(W) = \vol(V) < \infty$.
Let
$$Z := \bigcup \{\Phi(B): B \in \mathscr P_i\}.$$
By Proposition \ref{improvement on packing}, we can find $u_i: W \to \RR^D$ and line segments $\ell(B)$, $B \in \mathscr P_i$, such that $u_i$ agrees with $u_{i - 1}$ away from $Z$ and satisfies $Lu_i \leq K$, but for each $B \in \mathscr P_i$,
$$\Lip(u_i, \ell(B)) > \max_{x \in \ell(B)} \psi(x) - 2^{-i}.$$
Moreover, $u_i$ is homotopic to $u_{i - 1}$ relative to $\partial V$.

Since $V \setminus W$ does not intersect $Z$, $u_i = u_{i - 1}$ near $W$, and in particular, extends uniquely to all of $V$ while satisfying (\ref{preservation}).
Let
$$\mathscr L_i := \mathscr L_{i - 1} \cup \{\ell(B): B \in \mathscr P_i\}.$$
Then $\mathscr L_i$ satisfies (\ref{balls and lines}) by construction, and is finite since $\mathscr L_{i - 1}$ and $\mathscr P_i$ are finite.
Since $u_{i - 1}$ is homotopic to $u_0$ by induction, and $u_i$ is homotopic to $u_{i - 1}$, (\ref{homotopy}) follows.
\end{proof}

Throughout the rest of this section, we fix a Lipschitz map $u_0: V \to N$, and let $\mathscr L_i, \mathscr P_i$ be as in Lemma \ref{iterating the packing}.
We will later show that for every dyadic scale $2^{-i}$ with $i$ large enough, and every $x \in V$, we can estimate $|Lu_i(x) - \psi(x)|$ provided that there exists $j \sim i$ and $B \in \mathscr P_j$ such that $x$ is $2^{-i}$-close to the center of $B$.
Thus we have two goals:
\begin{enumerate}
    \item We want to show that for every $x$ which is $2^{-i}$-far from $\partial V$, there exists $j \leq i$ and $B \in \mathscr P_j$ such that $x$ is $2^{-i}$-close to the center of $B$. This is accomplished by Lemma \ref{almost density of balls}.
    \item We want to show that the set of points which are either $2^{-i}$-close to $\partial V$, or are $2^{-i}$-close to the center of a ball $B \in \mathscr P_j$ with $j \leq i/2$, is exponentially small. This is accomplished by Lemma \ref{decay of the exceptional set}.
\end{enumerate}
The proof of Lemma \ref{almost density of balls} requires a lemma on the separation of the line segments, Lemma \ref{separation of line segments}.
Moreover, Lemmata \ref{almost density of balls} and \ref{decay of the exceptional set} only hold under certain geometric assumptions on the underlying manifold, given by Assumption \ref{geometric assumptions}; we will be able to remove these assumptions later.

\begin{lemma}\label{separation of line segments}
For every $\ell, \ell' \in \mathscr L_i$, either $\ell = \ell'$ or
$$\dist(\ell, \ell') > 2^{-i} - 4^{-i}.$$
\end{lemma}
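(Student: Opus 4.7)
The plan is to case-split on the levels at which $\ell$ and $\ell'$ are introduced. By property (\ref{balls and lines}) of Lemma \ref{iterating the packing}, each element of $\mathscr L_i$ arises uniquely as $\ell(B)$ for some $B = B(x, 2^{-j}) \in \mathscr P_j$ at a unique level $1 \leq j \leq i$, with $\ell(B) \Subset \Phi(B) = B(x, 4^{-j})$. So I write $\ell = \ell(B)$ at level $j$ and $\ell' = \ell(B')$ at level $j'$, with $B' = B(x', 2^{-j'})$, and assume without loss of generality $j \leq j'$. Compactness of $\overline{\ell}$ inside the open ball $B(x, 4^{-j})$ gives the strict bound $\alpha := \max_{y \in \ell} \dist(y, x) < 4^{-j}$, and likewise $\alpha' := \max_{y' \in \ell'} \dist(y', x') < 4^{-j'}$. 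A one-line calculus check shows $p(k) := 2^{-k} - 4^{-k}$ is strictly decreasing on $[1, \infty)$ (its derivative is $(\ln 2)(2 \cdot 4^{-k} - 2^{-k}) < 0$ for $k > 1$), so $p(j), p(j') \geq p(i) = 2^{-i} - 4^{-i}$.

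In the case $j = j'$, the balls $B$ and $B'$ are distinct, hence disjoint, members of the $2^{-j}$-packing $\mathscr P_j$, so their centers satisfy $\dist(x, x') \geq 2 \cdot 2^{-j}$ (any closer, and the midpoint of a minimizing geodesic joining them would lie in both open balls). The triangle inequality then gives
$$\dist(\ell, \ell') \geq \dist(x, x') - \alpha - \alpha' > 2 \cdot 2^{-j} - 2 \cdot 4^{-j} = 2\,p(j) \geq 2\,p(i) > p(i).$$

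In the case $j < j'$, the segment $\ell$ already lies in $\mathscr L_{j'-1}$, while $B' \in \mathscr P_{j'}$ is part of a packing of the open set $V \setminus \bigcup \mathscr L_{j'-1}$; the $\Subset$ clause in the definition of packing then forces $\overline{B'} \cap \ell = \emptyset$. Since $\ell$ is compact, the infimum in $\dist(x', \ell)$ is attained at a point of $\ell \setminus \overline{B'}$, so $\dist(x', \ell) > 2^{-j'}$ strictly. Combined with the compactness bound on $\ell'$, the triangle inequality yields
$$\dist(\ell, \ell') \geq \dist(x', \ell) - \alpha' > 2^{-j'} - 4^{-j'} = p(j') \geq p(i).$$
In either case $\dist(\ell, \ell') > p(i) = 2^{-i} - 4^{-i}$, which is the required bound.

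The one subtlety is the strictness of the conclusion, which looks tight in the second case when $j' = i$. It survives because the compact-containment conventions $\overline{B} \subset W$ in the definition of packing and $\ell(B) \Subset \Phi(B)$ in Proposition \ref{improvement on packing} each upgrade a nonstrict distance bound to a strict one via compactness of closed line segments and closed balls. No further geometric input about $M$ is required beyond the triangle inequality and the existence of minimizing geodesics between ball centers.
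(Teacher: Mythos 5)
Your proof is correct and rests on the same ingredients as the paper's: the disjointness of the packing balls, the compact containments $\ell(B) \Subset \Phi(B) \Subset B$, and the fact that balls of $\mathscr P_{j'}$ avoid $\bigcup \mathscr L_{j'-1}$. The paper organizes this as an induction on $i$ reducing to the case where one segment is new at level $i$, and bounds $\dist(\ell, V \setminus B)$ rather than distances to ball centers, but the argument is essentially the same; your explicit observation that $2^{-k} - 4^{-k}$ is nonincreasing for $k \geq 1$ is a detail the paper's induction uses silently.
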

\begin{proof}
Suppose that $\ell \neq \ell'$.
By induction, we may assume that this result holds if $\ell, \ell' \in \mathscr L_{i - 1}$.
So suppose that $\ell \in \mathscr L_i \setminus \mathscr L_{i - 1}$, so by Lemma \ref{iterating the packing}(\ref{balls and lines}) there exists $B \in \mathscr P_i$ such that $\ell \Subset \Phi(B)$.

We claim that $\ell'$ does not intersect $B$. This follows from casework:
\begin{enumerate}
    \item If $\ell' \in \mathscr L_{i - 1}$, then since balls in $\mathscr P_i$ are disjoint from $\bigcup \mathscr L_{i - 1}$, $\ell'$ does not intersect $B$.
    \item If $\ell' \in \mathscr L_i \setminus \mathscr L_{i - 1}$, then by Lemma \ref{iterating the packing}(\ref{balls and lines}), there exists $B' \in \mathscr P_i$ such that $\ell' \subset \Phi(B') \Subset B'$. Since $\mathscr P_i$ is a packing, $\ell'$ does not intersect $B$.
\end{enumerate}
In either case, the result follows, because
\begin{align*} 
\dist(\ell, \ell') &\geq \dist(\ell, V \setminus B) > \dist(\Phi(B), V \setminus B) > 2^{-i} - 4^{-i}. \qedhere
\end{align*}
\end{proof}

\begin{assumption}\label{geometric assumptions}
We assume that:
\begin{enumerate}
    \item The Lebesgue measure $\vol(V)$ is finite.
    \item Let $d := \dim M$. Then the Minkowski content $\mathcal M^{d - 1}(\partial V)$ is finite.
    \item The curvature tensor $\Riem_M$ satisfies
    $$\|\Riem_M\|_{C^0} \leq \varepsilon_0,$$
    where $\varepsilon_0 > 0$ is a small dimensional constant to be chosen later.
\end{enumerate}
\end{assumption}

\begin{lemma}\label{almost density of balls}
Suppose that Assumption \ref{geometric assumptions} holds. Let
$$\mathscr U_i := \{x \in V: B(x, 2^{-i}) \in \mathscr P_i\}.$$
For every $i \geq 10$ and $x \in V$,
$$\min\left(\dist(x, \partial V), \min_{j \leq i} \dist(x, \mathscr U_j)\right) \leq 2^{-i + 4}.$$
\end{lemma}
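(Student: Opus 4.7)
The plan is a case analysis on how close $x$ is to $\bigcup \mathscr L_{i-1}$, using maximality of $\mathscr P_i$ as the main tool. If $\dist(x, \bigcup \mathscr L_{i-1}) > 2^{-i}$, then $B(x, 2^{-i})$ is compactly contained in $V \setminus \bigcup \mathscr L_{i-1}$ (since $\dist(x, \partial V) > 2^{-i+4} > 2^{-i}$, with the injectivity-radius clause handled by Assumption \ref{geometric assumptions}(3) at scale $2^{-i}$), and maximality of $\mathscr P_i$ yields $y \in \mathscr U_i$ with $\dist(x, y) < 2^{-i+1} \leq 2^{-i+4}$. Otherwise, I would pick $\ell^* \in \mathscr L_{i-1}$ with $\dist(x, \ell^*) \leq 2^{-i}$, let $k^* \leq i-1$ be its introduction level, and let $y^* \in \mathscr U_{k^*}$ be the corresponding center from Lemma \ref{iterating the packing}(\ref{balls and lines}); then $\ell^* \Subset B(y^*, 4^{-k^*})$ gives $\dist(x, y^*) \leq 2^{-i} + 4^{-k^*}$. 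When $k^* \geq (i-3)/2$ this is at most $9 \cdot 2^{-i} \leq 2^{-i+4}$, so we finish with $j = k^*$.

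The remaining case $k^* < (i-3)/2$ is the hard one: the center $y^*$ of the long low-level segment $\ell^*$ is too far from $x$. My plan here is to perturb $x$ to a point $x' \in B(x, C \cdot 2^{-i})$, for a dimensional constant $C \leq 14$, such that $B(x', 2^{-i}) \Subset V \setminus \bigcup \mathscr L_{i-1}$, and then apply maximality of $\mathscr P_i$ at $x'$ to obtain $y \in \mathscr U_i$ with $\dist(x, y) \leq (C+2) \cdot 2^{-i} \leq 2^{-i+4}$. The existence of $x'$ should follow from Lemma \ref{separation of line segments} (which, via a packing bound, limits to a dimensional constant the number of segments in $\mathscr L_{i-1}$ whose $2^{-i}$-tubes can meet $B(x, C \cdot 2^{-i})$) together with a volume estimate: each such tube is codimension one with width $2 \cdot 2^{-i}$, and by Assumption \ref{geometric assumptions}(3) the Riemannian volumes at scale $2^{-i}$ agree with Euclidean ones up to negligible error, so the union of these tubes should fail to cover the whole candidate ball.

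The main obstacle is precisely this volume/packing estimate in the low-$k^*$ case: one must carefully balance the contribution of the few but long low-level tubes against that of the many but short high-level tubes, using the length bound $L \leq 2 \cdot 4^{-k}$ at level $k$ and the pairwise separation of centers within each $\mathscr P_k$, and verify that the numerical constants cleanly fit within the $2^{-i+4}$ slack permitted by the statement.
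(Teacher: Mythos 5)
Your first two cases are fine and correspond to the easy part of the argument (we may of course assume $\dist(x,\partial V) > 2^{-i+4}$, and your computation $\dist(x,y^*) \leq 2^{-i} + 2\cdot 4^{-k^*} \leq 2^{-i+4}$ when $k^* \geq (i-3)/2$ is exactly the arithmetic the paper uses, read in contrapositive). The gap is your third case, and it is not merely a matter of ``verifying constants'': the volume/covering estimate you are hoping for is false. Lemma \ref{separation of line segments} applied to $\mathscr L_{i-1}$ only guarantees that distinct segments are at distance $> 2^{-i+1} - 4^{-i+1}$, which is strictly \emph{less} than $2\cdot 2^{-i}$, twice the tube radius you need to avoid. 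A family of (nearly) parallel level-$(i-1)$ segments at this minimal separation has open $2^{-i}$-tubes that overlap and can cover an entire neighborhood of $x$; the packing count of such segments meeting $B(x, C\cdot 2^{-i})$ grows like $C^{d-1}$ while each tube occupies a $\sim C^{-(d-1)}$ fraction of the ball, so the comparison is exactly critical and fails (by an $O(4^{-i})$ margin in the wrong direction). No perturbation $x'$ with $B(x',2^{-i})$ disjoint from $\bigcup \mathscr L_{i-1}$ need exist, so maximality of $\mathscr P_i$ cannot be invoked this way.

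The missing idea is a dichotomy on the \emph{level} of every segment near $x$, not just the closest one. If any segment $\ell(B(q,2^{-j}))$ with $j \geq (i-3)/2$ passes within $\approx 2^{-i+2}$ of $x$, then its center $q \in \mathscr U_j$ is within $2^{-i+2} + 2\cdot 4^{-j} \leq 2^{-i+4}$ of $x$ and you are done --- this is your case 2 again, but applied to \emph{all} nearby segments, not only $\ell^*$. Hence in the remaining case every segment meeting $B(x,2^{-i+2})$ has level $< (i-3)/2$, and Lemma \ref{separation of line segments} applied at that \emph{low} level gives pairwise separation $> 2^{-(i-3)/2} - 4^{-(i-3)/2} \gg 2^{-i+3}$, so at most \emph{one} such segment meets $B(x,2^{-i+2})$. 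Since $\dim M \geq 2$, a single geodesic segment has codimension at least one, so one can choose a ball $B(y,2^{-i+1}) \subseteq B(x,2^{-i+2})$ avoiding it; maximality of $\mathscr P_i$ at $y$ (the step you already carried out in case 1) then forces a center of $\bigcup_{j\leq i}\mathscr U_j$ or a point of $\partial V$ within range, a contradiction. This replacement of your quantitative volume estimate by ``at most one low-level segment, which can be dodged transversally'' is the crux of the paper's proof, which runs the whole argument by contradiction via the Claim that every ball $B(y,2^{-i+1}) \subseteq B(x,2^{-i+4})$ meets $\bigcup\mathscr L_{i-1}$.
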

\begin{proof}
Suppose not, so that some ball $B(x, 2^{-i + 4}) \Subset V$ does not contain any point of $\bigcup_{j \leq i} \mathscr U_j$.
We first claim:

\begin{claim}
    Every ball $B(y, 2^{-i + 1}) \subseteq B(x, 2^{-i + 4})$ intersects a geodesic segment in $\mathscr L_{i - 1}$.
\end{claim}
\begin{proof}[Proof of claim]
    Otherwise, by maximality of the packing $\mathscr P_i$, and the completeness of $M$, any point of $V \setminus \bigcup \mathscr L_{i - 1}$ must be within $2^{-i}$ of a ball in $\mathscr P_i$, or $\partial V$, or $\bigcup \mathscr L_{i - 1}$.
But by definition of $y$, no ball in $\mathscr P_i$ intersects $B(y, 2^{-i})$, a set which also does not intersect $\partial V$ or $\mathscr L_{i - 1}$.
This is a contradiction.
\end{proof}

Applying the claim with $y := x$, we can find $j_1 \leq i - 1$ and $q_1 \in \mathscr U_{j_1}$ such that $\ell_1 := \ell(B(q_1, 2^{-j_1}))$ intersects $B(x, 2^{-i + 1})$.
Since $q_1 \notin B(x, 2^{-i + 4})$, but $\ell_1 \subset \Phi(B(q_1, 2^{-j_1})) = B(q_1, 4^{-{j_1}})$,
$$\dist(\partial B(x, 2^{-i + 4}), B(x, 2^{-i + 1})) < 4^{-j_1}.$$
From this it follows that
$$2^{-2j_1} = 4^{-j_1} > 2^{-i + 4} - 2^{-i + 1} > 2^{-i + 3}.$$
Therefore $j_1 < (i - 3)/2$.

Since $\dim M \geq 2$, there is a ball $B(y, 2^{-i + 1}) \subseteq B(x, 2^{-i + 2})$ which does not intersect $\ell_1$.
So by the claim, there exist $j_2 \leq i - 1$ and $q_2 \in \mathscr U_{j_2}$ such that $\ell_2 := \ell(B(q_2, 2^{-j_2}))$ intersects $B(y, 2^{-i + 1})$.
Arguing as above, we see that $j_2 \leq (i - 3)/2$ as well.

Since $\ell_1$ and $\ell_2$ both intersect $B(x, 2^{-i + 2})$,
$$\dist(\ell_1, \ell_2) < 2^{-i + 3}.$$
On the other hand, by Lemma \ref{separation of line segments},
$$\dist(\ell_1, \ell_2) > 2^{-\frac{i - 3}{2}} - 4^{-\frac{i - 3}{2}},$$
which is a contradiction when $i \geq 10$.
\end{proof}

\begin{lemma}\label{decay of the exceptional set}
Suppose that Assumption \ref{geometric assumptions} holds, where the curvature scale $\varepsilon_0$ is chosen sufficiently small.
Introduce the exceptional set
\begin{equation}\label{exponential exceptional set}
Z_i := \bigcup \left\{B(x, 2^{-i+4}): x \in \left(\partial V \cup \bigcup_{j \leq i/2} \mathscr U_j\right)\right\}.
\end{equation}
Then there exists $I \geq 10$ which only depends on the geometry of $M$ and $V$, such that if $i \geq I$ then
$$\vol(Z_i) \leq \frac{64 \mathcal M^{d - 1}(\partial V) + 64^d \vol(V)}{2^i}.$$
\end{lemma}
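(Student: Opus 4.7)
The plan is to split $Z_i$ into the tubular neighborhood $T_i := \{x \in V : \dist(x, \partial V) < 2^{-i+4}\}$ and the ``interior'' piece $E_i := \bigcup_{j \leq i/2} \bigcup_{x \in \mathscr U_j} B(x, 2^{-i+4})$, and to bound each separately. For $T_i$, the definition of Minkowski content as a $\limsup$ yields that for every $\eta > 0$ there is $I(\eta)$ such that $\vol(T_i) \leq 2 \cdot 2^{-i+4}(\mathcal M^{d-1}(\partial V) + \eta) = 32 \cdot 2^{-i}(\mathcal M^{d-1}(\partial V) + \eta)$ whenever $i \geq I(\eta)$; this is at most $64 \mathcal M^{d-1}(\partial V) \cdot 2^{-i}$ plus a harmless remainder of size at most $32 \eta \cdot 2^{-i}$.

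For $E_i$, I would apply Bishop--Gromov volume comparison, which is valid under Assumption \ref{geometric assumptions} at sufficiently small scales: there is a constant $C_\eta$ tending to $1$ as $\varepsilon_0 \to 0$, and a radius $r_0 > 0$, such that $C_\eta^{-1} \omega_d r^d \leq \vol(B(x, r)) \leq C_\eta \omega_d r^d$ for every $x \in M$ and every $r \leq r_0$. Choose $I$ large enough that all radii appearing below are at most $r_0$. Since $\mathscr P_j$ is a packing in $V$, a volume count gives $\#\mathscr U_j \leq C_\eta \vol(V) \cdot 2^{jd}/\omega_d$, and each expanded ball satisfies $\vol(B(x, 2^{-i+4})) \leq C_\eta \omega_d \cdot 2^{-(i-4)d}$. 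The contribution of scale $j$ to $\vol(E_i)$ is therefore at most $C_\eta^2 \vol(V) \cdot 2^{(j-i+4)d}$.

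Summing over $j \in \{0, 1, \dots, \lfloor i/2 \rfloor\}$ gives a geometric series in $j$ of ratio $2^d$, dominated by twice its top term $2^{(-i/2+4)d}$; since $d \geq 2$, the exponent $-id/2 + 4d$ is at most $-i + 4d$, yielding $\vol(E_i) \leq 2 C_\eta^2 \cdot 2^{4d} \vol(V) \cdot 2^{-i}$. Choosing $\varepsilon_0$ small enough that $2 C_\eta^2 \leq 2^{2d - 1}$ (easy, since $C_\eta \to 1$ and $d \geq 2$) makes $\vol(E_i)$ strictly smaller than $64^d \vol(V) \cdot 2^{-i}$, leaving a buffer of order $\vol(V) \cdot 2^{-i}$ into which the $32 \eta \cdot 2^{-i}$ excess from the boundary bound can be absorbed by taking $\eta$ small. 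How small $\eta$ must be depends on $\vol(V)$, which is why $I$ depends on the geometry of $V$. The only real obstacle is bookkeeping of constants; the hypothesis $d \geq 2$ gives just enough slack to absorb the multiplicative factors coming from volume comparison and the geometric sum into the sharp constant $64^d$.
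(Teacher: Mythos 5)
Your proposal is correct and follows essentially the same route as the paper: split $Z_i$ into the boundary tube $T_i$ (bounded via the $\limsup$ definition of Minkowski content) and the union of expanded packing balls (bounded by a volume-comparison count of $\card\mathscr U_j$, a geometric sum over $j \leq i/2$, and the observation that $d \geq 2$ turns $2^{-id/2}$ into $2^{-i}$). The only difference is bookkeeping — you absorb the Minkowski-content error $\eta$ into the slack of the $64^d\vol(V)$ term, whereas the paper absorbs it into the factor-of-two slack in $64\,\mathcal M^{d-1}(\partial V)$ — which is immaterial.
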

\begin{proof}
Let 
$$T_i := \{z \in V: \dist(z, \partial V) < 2^{-i + 4}\}.$$
By definition of the upper Minkowski content, there exists $I \geq 10$ such that if $i \geq I$,
$$\mathcal M^{d - 1}(\partial V) \geq \frac{\vol(T_i)}{2^{-i + 6}}.$$

Let $\alpha$ be the volume of the unit ball in $\RR^d$.
If $\varepsilon_0$ is small enough, then for every $x \in M$ with $\dist(x, \partial V) > 2^{-j}$,
$$2^{-(j - 1)d}\alpha \leq \vol(B(x, 2^{-j})) \leq 2^{-(j + 1)d}\alpha.$$
Since $\mathscr P_j$ is a set of disjoint $2^{-j}$-balls in $V$, and is in bijection with $\mathscr U_j$,
$$\card \mathscr U_j \leq \frac{\vol(V)}{\inf_{x \in \mathscr U_j} \vol(B(x, 2^{-j}))} \leq \frac{\vol(V)}{\alpha} 2^{(j + 1)d}.$$
Therefore
$$\card \left(\bigcup_{j \leq i/2} \mathscr U_j\right) \leq \sum_{j=1}^{i/2} \card \mathscr U_j \leq \frac{\vol(V)}{\alpha} \sum_{j=1}^{i/2} 2^{(j + 1) d} \leq \frac{2^d \vol(V)}{\alpha} 2^{id/2}.$$
Therefore, since $d \geq 2$,
\begin{align*}
    \vol(Z_i)
    &\leq \vol(T_i) + \vol(B(0, 2^{-i + 4})) \card \left(\bigcup_{j \leq i/2} \mathscr U_j\right) \\
    &\leq \frac{64 \mathcal M^{d - 1}(\partial V)}{2^i} + \frac{64^d \vol(V)}{2^{id/2}} \\
    &\leq \frac{64 \mathcal M^{d - 1}(\partial V) + 64^d \vol(V)}{2^i}. \qedhere
\end{align*}
\end{proof}

\subsection{Taking the limit of the packings}
We are now ready to prove the main theorem under the geometric hypothesis, Assumption \ref{geometric assumptions}.

\begin{lemma}\label{smooth case}
Theorem \ref{eikonal extension} holds if we in addition assume for $V := M \setminus \Gamma$ that Assumption \ref{geometric assumptions} holds.
\end{lemma}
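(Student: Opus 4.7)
The plan is to initialize $u_0 := f|_V$, apply Lemma \ref{iterating the packing} to produce a sequence of Lipschitz maps $(u_i)$ together with packings $\mathscr P_i$ and line segments $\mathscr L_i$, and then pass to a limit. Since $Lu_i \leq \psi$ pointwise and $u_i$ agrees with $u_{i-1}$ off $\bigcup \{\Phi(B) : B \in \mathscr P_i\}$, the family $(u_i)$ is locally equi-Lipschitz; Arzel\`a--Ascoli yields a subsequence converging uniformly on compact subsets of $V$ to some continuous $u$, which I extend to $M$ by setting $u := f$ on $\Gamma$.

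The upper bound $Lu \leq \psi$ is obtained in two stages. Lemma \ref{Lip is lower semicontinuous} combined with $Lu_i \leq \psi$ gives $Lu \leq \psi$ almost everywhere on $V$, and Lemma \ref{lower semicontinuity on average} (since $\psi$ is continuous, hence lower-semicontinuous) upgrades this to a pointwise bound.

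The heart of the proof is the reverse inequality $Lu \geq \psi$ at every $x \in V$. Fix $\varepsilon > 0$ and a small $r > 0$, and choose $\rho \in (0, r/4)$ so small that $\psi(y) \geq \psi(x) - \varepsilon$ for all $y \in B(x, 2\rho)$. By Lemma \ref{decay of the exceptional set} the set $Z_i$ has measure $O(2^{-i})$, so for $i$ sufficiently large I can pick $y \in B(x, \rho) \setminus Z_i$. Lemma \ref{almost density of balls} then supplies $j \leq i$ and $B = B(q, 2^{-j}) \in \mathscr P_j$ with $\dist(y, q) \leq 2^{-i+4}$; since $y \notin Z_i$ we must have $j > i/2$, so $\ell(B) \Subset \Phi(B) = B(q, 4^{-j})$ lies well inside $B(x, r)$ once $i$ is large. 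The crucial observation is that every later packing leaves $\ell(B)$ untouched: each $B' \in \mathscr P_k$ with $k > j$ lies in $V \setminus \bigcup \mathscr L_{k-1} \subseteq V \setminus \ell(B)$, so $\Phi(B') \subseteq B'$ cannot meet $\ell(B)$ and the preservation clause of Lemma \ref{iterating the packing} forces $u = u_j$ on $\ell(B)$. Combining this with Lemma \ref{iterating the packing}(\ref{balls and lines}) yields
\[
\Lip(u, B(x, r)) \geq \Lip(u_j, \ell(B)) > \max_{\ell(B)} \psi - 2^{-j} \geq \psi(x) - \varepsilon - 2^{-j},
\]
and letting $i \to \infty$ then $\varepsilon \to 0$ gives $\Lip(u, B(x, r)) \geq \psi(x)$ for every $r$, hence $Lu(x) \geq \psi(x)$.

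For the homotopy, each $u_i$ is homotopic to $f$ rel $\partial V$ by Lemma \ref{iterating the packing}(\ref{homotopy}); since $u_k \to u$ uniformly on compacts and the injectivity radius of $N$ is bounded below on any compact neighborhood of the image, the radial geodesic homotopy in $N$ connects $u_k$ to $u$ once $k$ is large, yielding a homotopy from $f$ to $u$ rel $\partial V$, and hence rel $\Gamma$ since the modifications never touch $\Gamma$. The main obstacle is the pointwise lower bound: one must use both the quantitative almost-density of Lemma \ref{almost density of balls} and the persistence of each $\ell(B)$ under all subsequent iterations to locate a near-maximally stretched segment inside $B(x, r)$ for \emph{every} $x$, not merely almost every $x$, which is precisely what the geometric bookkeeping of Lemmata \ref{separation of line segments}, \ref{almost density of balls}, and \ref{decay of the exceptional set} secures.
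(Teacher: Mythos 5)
Your proposal is correct and follows essentially the same route as the paper: iterate Lemma \ref{iterating the packing}, pass to a locally uniform limit via Arzel\`a--Ascoli, get $Lu \leq \psi$ from Lemmata \ref{Lip is lower semicontinuous} and \ref{lower semicontinuity on average}, and obtain the reverse inequality by locating, via Lemmata \ref{almost density of balls} and \ref{decay of the exceptional set}, a nearly maximally stretched segment $\ell(B)$ near $x$ on which $u = u_j$ persists. The only (harmless, arguably cleaner) difference is in the endgame: the paper first proves $Lu \geq \psi - \varepsilon$ off the null set $Z = \bigcap_i \bigcup_{j \geq i} Z_j$ and then extends to all of $V$ by upper-semicontinuity of $Lu$, whereas you get the bound at every point directly by choosing an auxiliary point $y \in B(x,\rho) \setminus Z_i$.
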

\begin{proof}
Let $u_0 := f$, let $u_i, \mathscr P_i, \mathscr L_i$ be as in Lemma \ref{iterating the packing}, and let $\mathscr U_i$ be as in Lemma \ref{almost density of balls}.
Since $Lu_i \leq \psi$, and $\psi$ is continuous, we obtain from (\ref{Lip is sup of local Lips}) that $u_i$ is Lipschitz on each ball $B(0, R)$.
So by a diagonal argument and the Arzela-Ascoli theorem, after taking a subsequence there exists a locally Lipschitz map $u$ such that $u_i \to u$ locally uniformly.
Then $u$ is homotopic to $f$, and by Lemma \ref{Lip is lower semicontinuous}, $Lu \leq \psi$ on almost all of $V$, so by Lemma \ref{lower semicontinuity on average}, $Lu \leq \psi$ on all of $V$.

It remins to show that $Lu = \psi$ everywhere.
To this end, let $Z_i$ and $I$ be as in Lemma \ref{decay of the exceptional set}.
Introduce the exceptional set
$$Z := \bigcap_{i \geq I} \bigcup_{j \geq i} Z_j.$$
By Lemma \ref{decay of the exceptional set}, there exists $C > 0$ which only depends on the geometry of $V$ such that
$$\vol\left(\bigcup_{j \geq i} Z_j\right) \leq \frac{C}{2} \sum_{j=i}^\infty 2^{-j} = \frac{C}{2^i}.$$
By continuity from above, it follows that $Z$ is null.

For every $x \in V \setminus Z$ and $\varepsilon > 0$, there exists $i \geq I$ such that
$$2^{-i} < \max(\varepsilon^2, 16 \dist(x, \partial V))$$
and $x \notin \bigcup_{j \geq i} Z_j$.
By Lemma \ref{almost density of balls}, there exists $j \geq i/2$ and $q \in \mathscr U_j$ such that $|x - q| < 2^{-i + 4}$.

From Lemma \ref{iterating the packing}(\ref{balls and lines}), there exists a geodesic segment $\ell \in \mathscr L_j$ such that
\begin{align*}
\ell &\Subset B(q, 4^{-j}) \subseteq B(q, 2^{-i}) \\
\Lip(u_j, \ell) &> \max_{z \in \ell} \psi(z) - 2^{-j} \geq \max_{z \in \ell} \psi(z) - 2^{-i/2} > \max_{z \in \ell} \psi(z) - \varepsilon.
\end{align*}
An induction on Lemma \ref{iterating the packing}(\ref{preservation}) implies that for $p \geq j$, $u_p|_\ell = u_j|_\ell$; taking the limit, we see that $u|_\ell = u_j|_\ell$.
Therefore
$$\Lip(u, \ell) > \max_{z \in \ell} \psi(z) - \varepsilon,$$
so by (\ref{Lip is sup of local Lips}), there exists $y_i \in \ell$ such that
$$Lu(y_i) > \max_{z \in \ell} \psi(z) - \varepsilon \geq \psi(y_i) - \varepsilon.$$
But $|y_i - q| < 2^{-i}$, so $|y_i - x| < 2^{-i + 5}$.

We can now take $i \to \infty$ and apply upper-semicontinuity of $Lu$ and continuity of $\psi$ to deduce that for every $x \in V \setminus Z$,
$$Lu(x) \geq \psi(x) - \varepsilon.$$
Since $Lu(x)$ is upper-semicontinuous and $Z$ is null, this inequality holds for every $x \in V$.
Since $\varepsilon$ was arbitrary, $Lu \geq \psi$, which completes the proof.
\end{proof}

\subsection{Removing the geometric assumptions}
We finally remove Assumption \ref{geometric assumptions}.
We first remove the assumption on the curvature tensor by a scaling argument, and then remove the other assumptions by taking a compact exhaustion of $M \setminus \Gamma$.

\begin{lemma}\label{precompact case}
Theorem \ref{eikonal extension} holds if we in addition assume for $V := M \setminus \Gamma$ that $V$ is precompact in $M$, and $\mathcal M^{d - 1}(\partial V) < \infty$.
\end{lemma}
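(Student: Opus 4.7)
The plan is to reduce to Lemma \ref{smooth case} by a rescaling of the Riemannian metric on $M$. For $\lambda \geq 1$, replace $g$ with $g_\lambda := \lambda^2 g$; this is still a complete Riemannian metric, since completeness is preserved by constant rescalings. Distances scale as $\dist_{g_\lambda} = \lambda \dist_g$, so for any Lipschitz map $h : M \to N$ the local Lipschitz constant satisfies $L_{g_\lambda} h = \lambda^{-1} L_g h$. The Levi-Civita connection is invariant under this constant rescaling (by the Koszul formula), so the $(1,3)$-Riemann tensor is unchanged as a multilinear map; its operator norm measured in the metric satisfies $\|\Riem\|_{C^0, g_\lambda} = \lambda^{-2} \|\Riem\|_{C^0, g}$, because the vector $R(X,Y)Z$ picks up a factor of $\lambda$ in its $g_\lambda$-norm while the three input vectors each contribute a factor of $\lambda$ to the denominator. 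Volumes and $(d-1)$-dimensional Minkowski contents scale by $\lambda^d$ and $\lambda^{d-1}$ respectively, and in particular remain finite.

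Since $V$ is precompact in $M$, the set $\overline V$ is compact, so $\vol_g(V) < \infty$ and $\|\Riem\|_{C^0(\overline V, g)} < \infty$. I would choose $\lambda$ large enough that $\|\Riem\|_{C^0(\overline V, g_\lambda)} \leq \varepsilon_0$, where $\varepsilon_0$ is the dimensional constant appearing in Assumption \ref{geometric assumptions}, and set $\psi' := \psi/\lambda$. Applying Lemma \ref{smooth case} to the manifold $(M, g_\lambda)$ with data $\Gamma$, $f$, and $\psi'$: the hypothesis $L_g f \leq \psi$ rescales to $L_{g_\lambda} f \leq \psi'$, parts (1) and (2) of Assumption \ref{geometric assumptions} follow from precompactness of $V$ and the hypothesis on $\mathcal M^{d-1}(\partial V)$, and part (3) holds on $\overline V$ by the choice of $\lambda$. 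The conclusion is a Lipschitz map $u : M \to N$, homotopic to $f$ relative to $\Gamma$, with $L_{g_\lambda} u = \psi'$ on $V$. Multiplying by $\lambda$ unscales this to $L_g u = \psi$ on $V$, as required.

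The main obstacle is that Assumption \ref{geometric assumptions}(3) is stated as a global curvature bound over all of $M$, whereas rescaling only controls curvature on the compact set $\overline V$; off $\overline V$ the curvature of $(M, g_\lambda)$ may remain arbitrarily large. I would handle this by inspecting the proof of Lemma \ref{smooth case} and noting that every geometric input — the maximal packings of $V \setminus \bigcup \mathscr L_{i-1}$, the oscillator construction of Lemma \ref{existence of oscillators}, and the volume-comparison and density estimates underlying Lemmas \ref{almost density of balls} and \ref{decay of the exceptional set} — takes place inside $V$ or within a vanishing neighborhood of $\overline V$, so only a curvature bound on a fixed neighborhood of $\overline V$ is actually used. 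Alternatively, one can replace $g_\lambda$ off a precompact neighborhood of $\overline V$ with a complete metric of bounded curvature (for instance by conformally gluing on a flat end); since all modifications of $f$ occur inside $V$ and $u = f$ on $M \setminus V$, this does not affect either the construction of $u$ or the identity $L_g u = \psi$ with respect to the original metric.
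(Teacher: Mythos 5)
Your proposal is correct and follows essentially the same route as the paper: rescale the metric by a large constant so that the curvature bound of Assumption \ref{geometric assumptions} holds, rescale $\psi$ accordingly, and invoke Lemma \ref{smooth case}. Your extra observation that the rescaling only controls curvature on a neighborhood of $\overline V$ rather than globally is a point the paper glosses over (it simply declares the assumption met once $|\Riem_M| \leq \varepsilon_0$ near $\overline V$), and your resolution---that the proof of Lemma \ref{smooth case} only uses the geometry near $\overline V$---is the right one.
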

\begin{proof}
Since $V$ is precompact in $M$, $\vol(V) < \infty$ and $|\Riem_M|$ is bounded on a neighborhood of $\overline V$.
Moreover, Theorem \ref{eikonal extension} is invariant under rescaling the metric on $M$, provided that we rescale $\psi$ as well.
So it is no loss to assume that $|\Riem_M| \leq \varepsilon_0$ on a neighborhood of $\overline V$.
Then we have met Assumption \ref{geometric assumptions}, so the conclusion follows from Lemma \ref{smooth case}.
\end{proof}

\begin{proof}[Proof of Theorem \ref{eikonal extension}]
Let $V := M \setminus \Gamma$, and choose a sequence $(V_n)$ of precompact open subsets of $V$ such that:
\begin{enumerate}
    \item $V_{n + 1} \supseteq V_n$.
    \item $\bigcup_n V_n = V$.
    \item $V_0 = \emptyset$.
    \item $\partial V_n$ is smooth.
\end{enumerate}
Also let $\Gamma_n := \overline V_{n - 1} \cup (M \setminus V_n)$ if $n \geq 1$.
Let $u_0 := f$, and suppose that we are given $n \geq 1$ and maps $u_0, \dots, u_{n - 1}$ such that:
\begin{enumerate}
    \item For each $m \leq n - 1$, $Lu_m = \psi$ on $V_m$.
    \item For each $1 \leq m \leq n - 1$, $u_m = u_{m - 1}$ on $\Gamma_m$.
\end{enumerate}
Since $\partial (V_n \setminus \Gamma_n)$ is a compact smooth hypersurface, $\mathcal M^{d - 1}(\partial (V_n \setminus \Gamma_n)) < \infty$.
By Lemma \ref{precompact case} applied to with $M$ replaced by $V_n$ and $\Gamma$ replaced by $\Gamma_n$, we can find a map $u_n$ such that $u_n = u_{n - 1}$ on $\Gamma_n$ and $Lu_n = \psi$ on $V_n \setminus \overline V_{n - 1}$.

\begin{claim}
For every $x \in \overline V_{n - 1}$, $Lu_n(x) = \psi(x)$.
\end{claim}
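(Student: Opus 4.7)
The plan is to split on whether $x$ lies in the open interior $V_{n-1}$ or on the boundary $\partial V_{n-1}$. In the interior case, since $V_{n-1}$ is open and contained in $\Gamma_n$, and $u_n = u_{n-1}$ on $\Gamma_n$ by construction, the two maps agree on an entire neighborhood of $x$; hence $Lu_n(x) = Lu_{n-1}(x) = \psi(x)$ by the inductive hypothesis that $Lu_{n-1} = \psi$ on $V_{n-1}$. This case requires no new ideas.

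For $x \in \partial V_{n-1}$ the upper bound $Lu_n(x) \leq \psi(x)$ comes directly from Lemma \ref{precompact case}, so the task is to prove the matching lower bound $Lu_n(x) \geq \psi(x)$. The strategy is to exploit the fact that $u_n$ was constructed so that $Lu_n = \psi$ on the adjacent open region $V_n \setminus \overline{V}_{n-1}$, and that every neighborhood of $x$ meets that region. Fix $\varepsilon > 0$ and pick $r > 0$ small enough that $B(x, r) \subseteq V_n$ and $|\psi(y) - \psi(x)| < \varepsilon$ for every $y \in B(x, r)$. Choose a witness point $y \in B(x, r) \cap (V_n \setminus \overline{V}_{n-1})$; then by construction $Lu_n(y) = \psi(y) > \psi(x) - \varepsilon$. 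Since $Lu_n(y) = \inf_\rho \Lip(u_n, B(y, \rho))$ and $B(y, \rho) \subseteq B(x, r)$ for $\rho$ sufficiently small, this forces
\[
\Lip(u_n, B(x, r)) \geq \Lip(u_n, B(y, \rho)) \geq Lu_n(y) > \psi(x) - \varepsilon.
\]
As $r$ ranges over sufficiently small positive numbers, the left side decreases to $Lu_n(x)$, so $Lu_n(x) \geq \psi(x) - \varepsilon$; letting $\varepsilon \to 0$ closes the argument.

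The only genuinely delicate point is checking that every neighborhood of a boundary point $x \in \partial V_{n-1}$ actually meets the open set $V_n \setminus \overline{V}_{n-1}$. This is where the regularity of the exhaustion is used: the sequence $(V_n)$ should be chosen so that $\overline{V}_{n-1} \Subset V_n$, which places $x$ in the interior of $V_n$, while the smoothness of $\partial V_{n-1}$ ensures that $V_{n-1}$ is the interior of its closure, so that small balls around $x$ really do hit $M \setminus \overline{V}_{n-1}$. I expect this to be a routine verification rather than the main obstacle; all of the analytic content has already been carried out in Lemma \ref{precompact case}, and the claim is essentially a bookkeeping step that lets the induction on $n$ propagate the equality $Lu_n = \psi$ from $V_n \setminus \overline{V}_{n-1}$ across the boundary into all of $V_n$.
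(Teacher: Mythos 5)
Your interior case and your lower bound at boundary points are correct and match the paper: for $x \in \partial V_{n-1}$ you take a witness $y$ near $x$ in $V_n \setminus \overline V_{n-1}$ where $Lu_n(y) = \psi(y)$ and push the estimate through $\Lip(u_n, B(x,r)) \geq Lu_n(y)$; this is exactly the upper-semicontinuity argument the paper invokes for $Lu_n(x) \geq \psi(x)$.

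The gap is in the opposite inequality. You assert that $Lu_n(x) \leq \psi(x)$ for $x \in \partial V_{n-1}$ ``comes directly from Lemma \ref{precompact case},'' but it does not. That lemma (applied with ambient manifold $V_n$ and boundary set $\Gamma_n = \overline V_{n-1} \cup (M \setminus V_n)$) asserts $Lu_n = \psi$ only on $V_n \setminus \Gamma_n = V_n \setminus \overline V_{n-1}$, and says nothing about $Lu_n$ on $\Gamma_n$ itself --- indeed, in the statement of Theorem \ref{eikonal extension} the function $\psi$ is not even defined on $\Gamma$. The point $x \in \partial V_{n-1}$ lies in $\Gamma_n$, and $Lu_n(x)$ depends on the behavior of $u_n$ on \emph{both} sides of the hypersurface, including the side where $u_n$ has been modified. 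Since $Lu_n$ is only upper-semicontinuous, knowing $Lu_n \leq \psi$ on $V_n \setminus \partial V_{n-1}$ does not automatically yield $Lu_n \leq \psi$ on the null set $\partial V_{n-1}$: upper semicontinuity allows $Lu_n$ to jump \emph{up} on a null set (compare the paper's example $u(x,y) = 1_{x>0}\,x$, whose local Lipschitz constant equals $1$ on the hyperplane $\{x=0\}$ even though $u$ vanishes on a half-space). The paper closes this gap with Lemma \ref{lower semicontinuity on average} (``lower semicontinuity on average''): since $Lu_n \leq \psi$ holds off the smooth, hence null, hypersurface $\partial V_{n-1}$, and $\psi$ is continuous, the averaged lower-semicontinuity of $Lu_n$ forces $Lu_n \leq \psi$ everywhere. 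This is precisely why the exhaustion is chosen with smooth boundaries, and it is the genuinely delicate point of the claim --- not, as you suggest, the verification that neighborhoods of $x$ meet $V_n \setminus \overline V_{n-1}$.
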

\begin{proof}[Proof of claim]
We first observe that the result holds if $x \in V_{n - 1}$, since the germs of $u_n$ and $u_{n - 1}$ at $x$ agree in that case.
So $Lu_n = \psi$ on $V_n \setminus \partial V_{n - 1}$.

Let $x \in \partial V_{n - 1}$.
By upper-semicontinuity of $Lu_n$, continuity of $\psi$, and the fact that the claim holds away from $\partial V_{n - 1}$, we deduce $Lu_n(x) \geq \psi(x)$.
Conversely, since $\partial V_{n - 1}$ is a smooth hypersurface, it is null, so by Lemma \ref{lower semicontinuity on average}, continuity of $\psi$, and the fact that the claim holds away from $\partial V_{n - 1}$, $Lu_n(x) \leq \psi(x)$.
\end{proof}

Therefore $u_n$ satisfies the inductive assumption.
It is clear that for each $m$, $u_n|_{V_m}$ stabilizes when $n = m$.
So $(u_n)$ converges locally uniformly to a map $u$ with $Lu = Lu_n$ on $V_n$.
Therefore $Lu = \psi$.
\end{proof}

\printbibliography

\end{document}